\documentclass[12pt,letterpaper]{amsproc}
\pdfoutput=1

\hoffset -0.3in \evensidemargin 0in \oddsidemargin 0in%
\voffset -0.5in \topmargin 5pt \headheight 20pt \headsep 15pt%
\textheight 9.4in \textwidth 7.2in \footskip 30pt

\usepackage{amsmath}

\usepackage{amsthm}

\def\essup{\operatorname{\text{ess\hskip 1mm sup}}}

\usepackage{amssymb,graphicx,mathrsfs,subfigure}

\newtheorem{lemma}{Lemma}
\def\essup{\operatorname{\text{ess\hskip 1mm inf}}}
\def\essuper{\operatorname{\text{ess\hskip 1mm sup}}}

\def\beq{\begin{equation}}
\def\eeq{\end{equation}}
\def\beqs{\begin{equation*}}
\def\eeqs{\end{equation*}}
\def\bbE{{\mathbb{E}}}

\def\cF{{\mathcal F}}
\def\Exp{\mathbb{E}}

\def\hx{\hat{x}}

\def\mL{{\mathcal{L}}}


\usepackage{booktabs}

\begin{document}

\title{Adaptive sampling for linear state estimation%
\footnote{Provisionally accepted for publication in the SIAM journal on control and optimization.}}

\author[M.~Rabi]{
  Maben Rabi}
\address[Maben~Rabi]{Department of Signals and Systems \\
  Chalmers University of Technology \\
  412 96 Gothenburg, Sweden.}

\author[G.~V.~Moustakides]{
  George~V.~Moustakides}
\address[George~V.~Moustakides]{Department of Electrical and
Computer Engineering\\ University of Patras\\ 26500 Rio, Greece.}

\author[J.~S.~Baras]{
  John~S.~Baras}
\address[John~S.~Baras]{Institute for Systems Research, and\\ the
Department of Electrical and Computer Engineering\\ University of
Maryland\\ College Park, MD 20742. USA.}

\date{\today}

\sloppy

\maketitle


\begin{abstract}
When a sensor has continuous measurements but sends limited messages
over a data network to a supervisor which estimates the state,
the available packet rate fixes the achievable quality of
state estimation.
When such rate limits turn stringent, the sensor's messaging
policy should be designed anew.
What are the good causal messaging policies~?
What should message packets contain~?
What is the lowest possible distortion in a causal estimate
at the supervisor~?
Is Delta sampling better than periodic sampling~?
We answer these questions under an idealized model
of the network and the assumption of perfect measurements at the sensor.
If the state process is a scalar, or a vector of low dimension,
then we can ignore sample quantization.
If in addition, we can ignore jitter in the transmission delays
over the network, then our search for efficient messaging policies
simplifies.
Firstly, each message packet should contain the value of the state at
that time. Thus a bound on the number of data packets becomes a bound
on the number of state samples.
Secondly, the remaining choice in messaging is entirely about the
times when samples are taken. 
For a scalar, linear diffusion process, we study the problem 
of choosing the causal sampling times that will give
the lowest  aggregate squared error
distortion. We stick to finite-horizons and impose a
hard upper bound~$N$ on the
number of allowed samples. We cast the design as a problem of
choosing an optimal sequence of stopping times. We reduce this
to a nested sequence of problems each asking for a single optimal stopping
time.
Under an unproven but natural assumption
about the least-square estimate at the supervisor,
 each of these single stopping problems 
are 
of standard form.
The optimal stopping times are random times when the
estimation error exceeds designed envelopes. For the case where
the state is a Brownian motion, we give analytically: the shape
of the optimal sampling envelopes, the shape of the envelopes
under optimal Delta sampling, and their performances.
Surprisingly, we find that Delta sampling performs badly.
Hence, when the rate constraint is a hard limit on the
number of samples over a finite horizon, we should
should not use Delta sampling.
\end{abstract}


\section{Introduction}
Networked control systems have some control loops
completed over data networks rather than
over dedicated analog wires or field buses.
In such systems, monitoring and
control tasks have to be performed under constraints on the amount
of information that can be communicated to the supervisor or control station.
These communication constraints limit the rate of packet transmissions
from sensor nodes to the supervisor node. Even at these limited
rates, the network communications can be less than
ideal:~the packets can be delayed and sometimes lost.
In the networked system, all of these communication
degradations  lower performance, and so these
effects must be accounted for during control design.
In this paper, we only account for the limit on the packet rates,
and completely ignore random delays and packet losses.

Sending data packets as per a periodic timetable works well when high
data rates are possible. Sending packets aperiodically
and at variable times becomes worthwhile only when the
packet rate limits get stringent like in an industrial  wireless network.
Conceptually, packet rate constraints can be of the following three
types: 1)~{\em{Average rate limit}}: This is a  `soft constraint'
and calls for an upper limit on the average number of
transmissions, 2)~{\em{Minimum waiting time between
transmissions}}: Under this type, there is a mandatory minimum
waiting time between two successive transmissions from the same
node, and 3)~{\em{Finite transmission budget}}: This is a `hard
constraint' and allows only up to a prescribed number of
transmissions from the same node over a given time window. In the
simplest version of the third type of constraint, we set the
constraint's window to be the problem's entire time horizon. In its other
variations, we can promote a steadier flow of samples and
avoid too many samples being taken in a short while. This we do
by cutting the problem's time horizon into many disjoint
segments and applying the finite transmission
budget constraint on every segment.

Notice that these different types of constraints can be mixed in
interesting ways. In this work, we will adopt the simple version
of the finite transmission budget, in which the budget window is the
same as the problem's time horizon.
We study a problem of state
estimation which is an important component of distributed control
and monitoring systems. In Specific, a scalar linear signal is
continuously and fully observed at a sensor which generates a
limited number of packets. A supervisor receives the sequence of
packets and, on its basis, maintains a causal estimate. 
Clearly, the fewer the packets allowed, the worse the distortion.
 The design
question is:  {\em{How should the packets be chosen by the sensor
to minimize the estimation distortion ?}} The answer to this
question employs the idea that: {\em{packets should be generated
only when they contain `sufficiently' new information.}} Adaptive
sampling schemes, or, event-triggered sampling schemes as they are
also called, exploit this idea and send samples at times
determined by the trajectory of the source signal being sampled.
In contrast, deterministic sampling chooses sample times according
to an extraneous clock.

But first, we will consider possible times when packets should be sent,
and allowable payloads they can carry. The times when packets are sent
must be causal times which, even if random, are stopping times w.r.t. the sensor's
observations process. Likewise, he payloads have to be measurable w.r.t.
the filtration generated by the observations process. 
The above restrictions are merely the demands of causality.
When we place some idealized assumptions about the network,
a simple and obvious choice of
payload emerges.

\subsection{Strong Markov property, idealized network, and choice of
payload}
For all the problems treated in this paper, we need the two clocks at the
sensor and the supervisor to agree, and of course report time correctly.
We will also assume that the state signal~$x_t$ is a strong Markov
process. This means that for {\it{any stopping time}}~$\tau$, and any
measurable subset~$A$ of the range of $x$, and for any time~$t\ge\tau$,
\begin{align*}
{\mathbb{P}}\left[ x_t \in A \left| {\mathcal{F}}_{\tau}^{x} \right. \right] 
& = 
{\mathbb{P}}\left[ x_t \in A \left| x_{\tau} \right. \right].
\end{align*}
Linear diffusions of course have the strong Markov property.

Let the sequence $\left\{ \tau_1, \tau_2, \ldots \right\}$ of
positive reals represent the sequence of times when the sensor
puts packets on the network. Let the sequence of binary words  $\left\{
\pi_1, \pi_2, \ldots \right\}$ denote the corresponding sequence of
payloads put out. Let the sequence $\left\{ \sigma_1, \sigma_2, \ldots
\right\}$ of non-negative reals denote the corresponding transmission
delays incurred by these packets. These delays can be random but still
must be independent of the signal process. The packet arrival
times at the supervisor, arranged in the order in which they were sent,
will be: $\left\{ \tau_1+\sigma_1, \ \tau_2+\sigma_2, \ \ldots \right\}$.
Let the positive integer~$l(t)$ denote the number of packets put out by
the sensor up to and including the time~$t$. We have:
\begin{align*}
l(t) & = \sup \left\{ i \left| \tau_i \le t \right. \right\}.
\end{align*}
A causal record of the sensor's communication activities is the transmit
process defined as follows:
\begin{align*}
{\mathbb{TX}}_t & = \begin{pmatrix}
                        \tau_{l(t)} \\
                        \pi_{l(t)}
                    \end{pmatrix}.
\end{align*}
When a packet arrives, the supervisor can see the source time
stamp~$\tau_j$, the payload~$\pi_j$, and of course the arrival
time~$\tau_j+\sigma_j$. Of course, we ignore quantization
noise in the time-stamps, with the result that
the supervisor can read both $\tau_j$ and $\tau_j+\sigma_j$ with
infinite precision. The causal record of what the supervisor
receives over the network is described by the random process defined as:
\begin{align*}
{\mathbb{RX}}_t & =
                 \sum_j{ \ \ 
{\mathbf{1}}_{ {\mbox{\scriptsize $ \begin{Bmatrix}
                      \tau_j + \sigma_j \le t, \ {\text{and}} \\
                       t < \tau_{j+1} + \sigma_{j+1}          
              \end{Bmatrix} $  }}
             } 
                   \begin{pmatrix}
                        \tau_{j} \\
                        \tau_{j} + \sigma_{j} \\
                        \pi_{j}
                    \end{pmatrix},
                      }
\end{align*}
where we have assumed that no two packets arrive at exactly the same
time, and that packets are received in exactly the order in which they
were sent. If we were to study the general case where packets can arrive
out of sequence, then the arguments below will have to be made more
delicate, but the conclusion below will still hold.

The supervisor's task is causal estimation. This fact restricts the way
in which $ {\mathbb{RX}}_t $  is used by the supervisor. Let the
count~$r(t)$ denote the number of packets received so far. Then, the
data in the hands of the supervisor at time~$t$ is the collection:
\begin{gather*}
r(t), \bigl\{  \left( \tau_{j}, \tau_{j} + \sigma_{j}, \pi_{j} \right)
               \left|  1 \le j \le r(t)   \right.
      \bigr\}
\end{gather*}
This is to be used to estimate the present and future values of the
state. 

Consider the question what of what the sensor should assign as payloads
to maximize information useful for signal extrapolation. In specific,
what should the latest payload~$\pi_{r(t)}$ be~?. If
the bit width of payloads is large enough to let us ignore quantization, then the
best choice of payload is the sample value at the time of generation,
namely~$x_{\tau_{r(t)}}$. Because of the strong Markov property, at
times $s\ge t$,
\begin{align*}
{\mathbb{P}} \Bigl[  x_s \in A
\left| \  {{x_{\tau_{r(t)}}}}, 
r(t), \bigl\{ \left( \tau_{j}, \tau_{j} + \sigma_{j}, \pi_{j} \right)
               \left|  1 \le j \le r(t)   \right.
      \bigr\}
\right.
\Bigr]
& =
{\mathbb{P}} \Bigl[  x_s \in A
\left| \ {{x_{\tau_{r(t)}}}} 
\right.
\Bigr],
\end{align*}
which means that if $\pi_{r(t)}$ carries $x_{\tau_{r(t)}}$ exactly, then
the future estimation errors are minimized. 
Therefore, the ideal choice of payload is the sample value. But what
about the practical non-zero quantization noise~? Again, the strong
Markov property implies that all the bits available should be used
to encode the current sample; the encoding scheme depends on the
distortion criterion for estimation.

If the packets do not arrive out of turn, the effect of packet delays
even when random is not qualitatively different from the ideal
case where all  transmission delays are zero. Nonzero delays can only
make the estimation performance worse. Hence, we will assume all packet
transit delays to be zero, and
$l(t) = r(t)$ always.

\subsection{Ignoring quantization noise}
 In most networks~\cite{wirelessHART,tilburyHandbook,kalleCAN}, the
packets are of uniform size and even when of variable size, have
at least a few bytes of header and trailer files. These segments
of the packet carry source and destination node addresses, a time
stamp at origin, some error control coding, some higher layer
(link and transport layers in the terminology of data networks)
data blocks and any other bits/bytes that are essential for the
functioning of the packet exchange scheme but which nevertheless
constitute what is clearly an overhead. The payload or actual
measurement information in the packet should then be at least of
the same size as these `bells and whistles'. It costs only
negligibly more in terms of network resources, of time, or of
energy to send a payload of five or ten bytes instead of two bits
or one byte when the overhead part of the packet is already five
bytes. This means that the samples being packetized can be
quantized with very fine detail, say with four bytes, a rate at
which the quantization noise can be ignored for low dimensional
variables. For Markov state processes, this means that all of
these bytes of payload can be used to specify the latest value of
the state. In other words, in essentially all packet-based
communication schemes, the right unit of communication cost is the
cost of transmitting a single packet.
The exact number of bits used to quantize
the sample is not important, as long as there are enough to make
quantization noise insignificant. There are of course special
situations where the quantization rate as well as the sample
generation rate matter. An example occurs in the internet
congestion control mechanism called Transmission Control
Protocol~(TCP)~\cite{jacobssonTCP} where a node estimates the
congestion state of a link through congestion bits added to
regular data packets. In this case, the real payload
in packets is irrelevant to the congestion state, and the
information on the congestion state is derived from the one or two
bits riding piggy-back on the data packets. The developments in
this paper do not apply to such problems where
quantization is important.

\subsection{Infinite Shannon capacity and well-posedness}
The continuous-time channel from the sensor to the supervisor is
idealized and noise-free. Even when a sequence of packets is delivered
with delays, the supervisor can recover perfectly the input
trajectory~${\{{\mathbb{TX}}\}}_0^T$ from the corresponding
trajectory of the output~${\{{\mathbb{RX}}\}}_0^{T}$.
The supervisor can read each time~$\tau_i$, and the sample
value~$x_{\tau_i}$ with infinite precision. Since the sensor has
and infinite range of choices for each~$\tau_i$, the channel has
infinite communication capacity in the sensor of Shannon.

But this does not render the sampling problem ill-posed. A packet
arriving at time contains the data:
$
l(  \tau_{i}  ), \tau_{i}, 
x_{\tau_{i}}.
$
Given  $ \left( \tau_{i}, x_{\tau_{i}} \right)$, the trajectory
of $x$ prior to $\tau_i$  is of no use for estimating
$\left\{x_s \left| s \ge  \tau_{i}  \right. \right\}$.
Therefore, it does not pay to choose $\tau_i$ cleverly so as to
convey extra news that the supervisor will use to extrapolate
future values of $x$. No such strategy can add to what the supervisor
already gets, namely the pair:~$\left(\tau_i, x_{\tau_i} \right)$.
There is nevertheless scope, and in fact a need for choosing
 $\tau_i$  cleverly so that the supervisor can use the silence
{\it{before}} $\tau_i$ to improve its state estimate {\it{before}}
$\tau_i$. But for the causal estimation problem the infinite Shannon
capacity does not sway the choice of sampling policies.

In summary, our assumptions so far are: (1)~the state is a strong Markov
process, (2)~the channel does not delay or lose packets, (3)~the
time-stamps~~$\tau_i$ are available with infinite precision
to the supervisor, and (4)~the sample value~$x_{\tau_i}$ is available
with infinite precision at the supervisor. Thus we have:
$\sigma_i = 0 \ \forall \ i, \ {\mathbb{RX}}_t = {\mathbb{TX}}_t \
\forall \ t, \ {\text{and}} \ r(t) = l(t) \ \forall \ t$.

%

\subsection{Relationship to previous works}
State estimation problems with communication rate constraints
arise in a wide variety of networked monitoring and control setups
such as sensor networks, wireless industrial
monitoring and control systems, rapid prototyping using a wireless
network, and multi-agent robotics. A recent overview of research
in Networked control systems including a variety of specific
applications is available from the special issue
\cite{antsaklisBaillieulSpecialIssue}.

Adaptive or event-triggered sampling
may also be used to model the functioning of various neural
circuits in the nervous systems of animals. After all, the neuron
is a threshold-triggered firing device whose operation is closely
related to  Delta-sampling. However, it is not presently clear if
the communication rate constraint adopted in this paper occurs in
biological Neural networks.

Adaptive sampling and adaptive timing of actuation have
been used in engineered systems for close to a hundred years. Thermostats
use on-off controllers which switch on or off at times when
the temperature crosses thresholds (subject to
some hysteresis). Delta-Sigma modulation~(Delta sampling) is an
adaptive sampling strategy 
used in signal processing and communication systems.
Nevertheless, theory has not kept up with practice.

{\em\bf{Timing of observations via Pull sampling and Push sampling:}}
The problem of choosing the time instants to sample sensor
measurements has received early attention in the literature.
Kushner~\cite{kushner-timing}, in 1964, studied the deterministic,
offline choice of
measurement times in a discrete-time, finite-horizon, LQG optimal
control problem.
He showed that the optimal deterministic
sampling schedule can be found by solving a nonlinear optimization
problem. Skafidas \& Nerode~\cite{skafidasNerode} allow the
online choice of  times for taking sensor measurements.
But these times are to be chosen
online by the controller rather than by the sensor.
Their conclusion is that for linear controlled systems,
the optimal choice of measurement
times can be made offline. Their offline
scheduling problem is the same as Kushner's deterministic one. 

A generalization of these problems of deterministic choice of
measurement times, is the {\em{Sensor scheduling problem,}} 
which has been studied for estimation, detection and control
tasks~\cite{meier67measurement,barasSensorScheduling,wuArapostathisTAC08}.
This problem asks for online schedules for gathering
measurements from different sensors available.
 However, the
information pattern for this problem is the same as in the works
of Kushner and of Skafidas \& Nerode.
Under this
information pattern, data flows from sensors to their
recipients is directed by the recipients.
Such sensor sampling is of the ``pull'' type. An
alternative is the ``push'' type of sampling where the
sensor itself
regulates the flow of its data. When only one sensor is available,
it has more information than the recipient and hence, its
decisions on when to communicate its measurements can
be better than decisions the supervisor can make. 
Adaptive sampling
is essentially the push kind of sampling.

{\em\bf{Lebesgue sampling and its generalizations:}}
The first analytic study of the communication
benefits of using event-triggered sampling was
presented in the 2002 paper of {\AA}str{\"o}m
\& Bernhardsson~\cite{astrom-bernhardsson-cdc}.
They 
treat a minimum variance control problem with the push type of
sampling. The control consists of impulses which reset the state
to the origin, but there is an upper limit on the rate at which
impulses can be applied. Under such a constraint, the design asks
for a schedule of the application times for the impulses. For
scalar Gaussian diffusions, they perform explicit calculations to
show that the application of impulses triggered by fixed levels is
more efficient than periodic application.

This has spurred further research in the area.
Our own work~\cite{rabi-baras-bahamas,rabi-moustakides-baras-med06,iwsm09,rabiJohanssonsCDC08}
 generalized the work of  {\AA}str{\"o}m \& Bernhardsson.
Their impulse control problem is equivalent to
the problem of sampling for causal estimation.
 In the setting of
discrete time, Imer \& Basar~\cite{imerBasarACC06two}
 study the problem of efficiently using a limited
number of discrete-time impulses.
For a finite-horizon LQG optimal
control problem, they use backward dynamic programming to show
that time-varying thresholds are optimal.
Henningsson~et.al.~\cite{cervinAutomatica} have
generalized to delays and transmission constraints
imposed by real data networks.

In the setting of discrete time, for infinite horizons,
Hajek~et.al.~\cite{hajekPagingITW,hajekPagingJournal}, have
treated essentially the same problem as us. They were the first to
point out that in the sequential decision problem, the two agents have
different information patterns. For a general Markov
state process, they describe as unknown
the jointly optimal choice of sampling policy and estimator.
For state processes which are symmetric random walks, they
show that the jointly optimal scheme uses adaptive sampling,
and that the corresponding estimator is the same `centered' estimator
one uses for deterministic sampling. We are unable to
prove a similar claim about the optimal estimator for our
continuous time problem. 

The study of optimal adaptive sampling timing leads to
{\em{Optimal stopping}} problems of Stochastic control, or
equivalently, to impulse control problems.
The information pattern of adaptive sampling complicates the
picture but methods of solving multiple stopping time problems of
standard form which are available in the
literature~\cite{carmonaDayanik} are indeed useful.

The work reported in this paper has been announced previously
in~\cite{rabiThesis, rabi-baras-bahamas,
rabi-moustakides-baras-cdc06}. 
In~\cite{rabi-moustakides-baras-med06},
the single sample case has been
dealt with in more detail than here.

\subsection{Contributions and outline of the paper}

For the finite horizon state estimation problem, we cast the
search for efficient sampling rules as sequential optimization
problem over a fixed number of causal sampling times. This we do
in section~\ref{problemFormulationSection} where, we formulate an
optimal multiple stopping problem with the aggregate quadratic
distortion over the finite time horizon as its cost function.
We restrict 
 the estimate at the supervisor to be that which would be optimal
under deterministic sampling. Following
Hajek~et.al.~\cite{hajekPagingITW,hajekPagingJournal},
we conjecture that when the state is a linear diffusion process,
 this estimate is indeed the least-square
estimate corresponding to the optimal sampling strategy.

In section~\ref{brownianMotionSection}, we take the simplified
optimal multiple stopping problem and solve it explicitly when the
state is the (controlled) Brownian motion process. The optimal
sampling policies are first hitting times of time-varying
envelopes by the estimation error signal. Our analytical solution
shows that for each of the sampling times, the triggering
envelopes are symmetric around zero and diminish monotonically in
a reverse-parabolic fashion as time nears the end of the horizon.
We also describe analytically, the performance of the class of
modified Delta sampling rules in which the threshold $\delta$ varies
with the number of remaining samples.
We point out a simple and recursive procedure for choosing the most
efficient of these Delta sampling policies.

For the Ornstein-Uhlenbeck process, in
section~\ref{ornsteinUhlenbeckSection}, we derive Dynamic
programming equations for the optimal sampling policy. We compute the
solution to these equations numerically. We are not able to say
whether an explicit analytic solution like for the Brownian motion
process is possible. We can say that the optimal sampling times are first hitting
times of time-varying envelopes by the estimation error signal.
These envelopes are symmetric around zero and diminish
monotonically as time nears the end of the horizon. Also derived
are the equations governing the performance of modified Delta
sampling rules and the most efficient among them is found through
a numerical search. Finally, in section~\ref{conclusionsSection},
we conclude and speculate on extensions to this work for other
estimation, control and detection problems.


\section{MMSE estimation and optimal sampling\label{problemFormulationSection}}

Under a deterministic time-table for the sampling instants, the
Minimum mean square error (MMSE) reconstruction for linear systems
is well-known and is straightforward to describe - it is the
Kalman filter with intermittent but perfect observations. The
error variance of the MMSE estimate obeys the standard Riccati
equation. In
Delta-sampling~\cite{norsworthy,gaborGyorfi,graySourceCodingbook},
also called Delta-modulation, a new sample is generated when the
source signal moves away from the previously generated sample
value by a distance $\delta$. By this rule, between successive
sample times, the source signal lies within a ball of radius
$\delta$ centered at the earlier sample. 
Such news of the state signal during an inter-sample interval, is
possible in adaptive sampling but never in deterministic sampling.
Because of this, the signal reconstruction under adaptive sampling
differs from that under deterministic sampling, and we will see below
what the difference is.

We will also set up an optimization problem where we seek an
adaptive sampling policy minimizing the distortion of the MMSE
estimator subject to a limit on the number of samples. 
    \begin{figure}[t]
     \centering
     \subfigure[Adaptive sampling for real-time estimation]%
     {
     \includegraphics[width=0.45\textwidth]{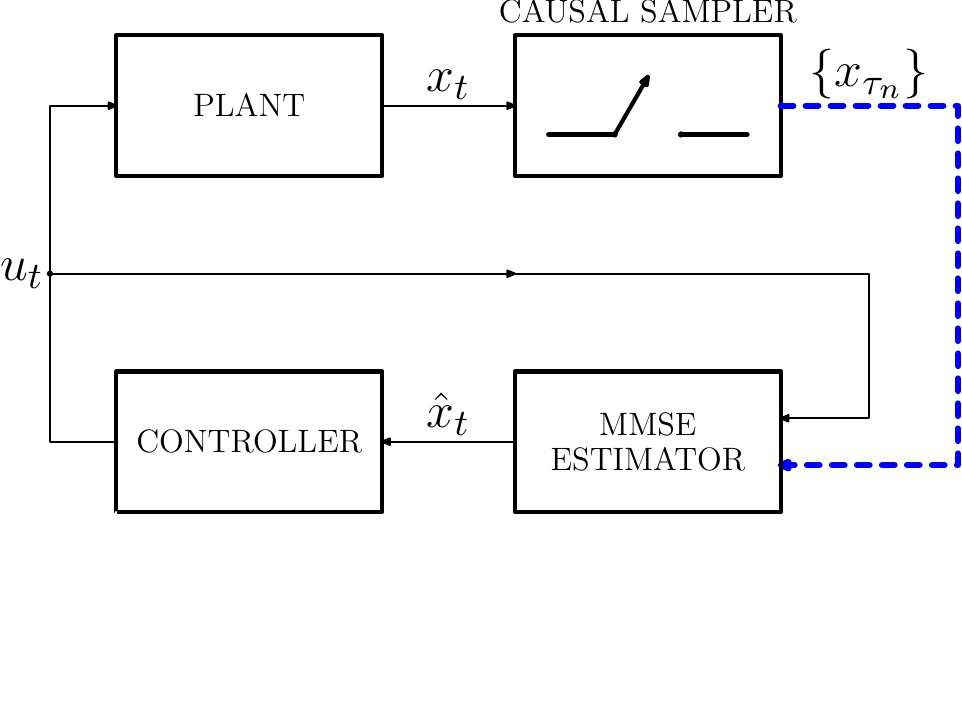}
     \label{bigPicture}
     }
     \subfigure[MMSE estimate under a time-varying threshold
     policy]%
     {
      \includegraphics[width=0.45\textwidth]{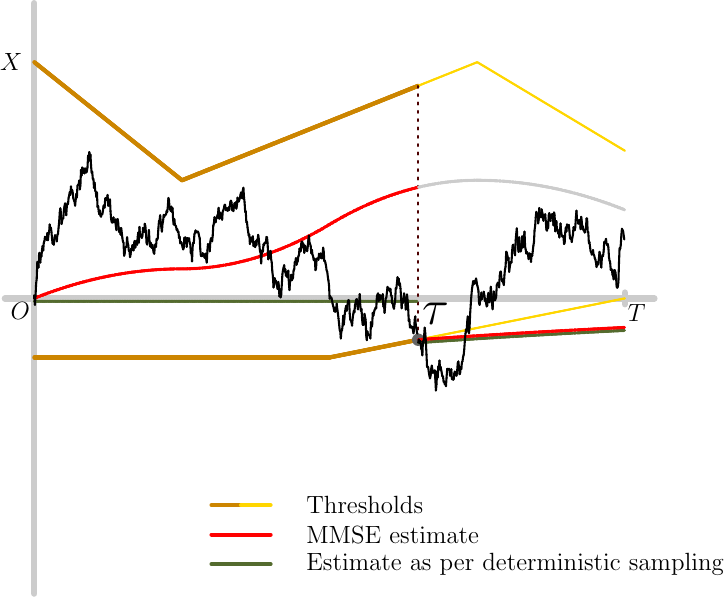}
      \label{generalETpicture}
     }
    \caption{Subfigure (a) depicts the setup for the MMSE estimation
    based on samples arriving at a limited rate. Subfigure (b)
    depicts the difference between estimators for Adaptive
    sampling and deterministic sampling.
     }
    \end{figure}
Consider a state process $x_t$ which is a (possibly controlled)
scalar linear diffusion. It evolves according to the SDE:
    \begin{align}
    dx_t & =  a x_t dt + b dB_t +  u_t dt, \ x_0 = x, \label{stateEquation}
    \end{align}
where, $B_t$  is a standard Brownian motion process. The control
process $u_t$ is RCLL and of course measurable with respect to the
$x$-process. In fact, the feedback form of $u_t$ is restricted to
depend on the sampled information only; we will describe this
subsequently. We assume that the drift coefficient $a$, the noise
coefficient $b\neq 0$, and, the initial value $x$ are known. Now,
we will dwell upon sampling and the estimation process.

The state is sampled at instants $\left\{ \tau_i \right\}_{i\ge
0}$ which are stopping times w.r.t. the $x$-process.
%
Recall that the process ${\mathbb{RX}}_t $ 
represents the data contained in the packets received at
the estimator:
\begin{align}
{\mathbb{RX}}_t & =
                 \sum_i{ \ \ 
{\mathbf{1}}_{ {\mbox{ \scriptsize $ \begin{Bmatrix}
                      \tau_i \le t, \ {\text{and}} \\
                       t < \tau_{i+1} 
              \end{Bmatrix} $ }}
             } 
                   \begin{pmatrix}
                        \tau_{i} \\
                        x_{\tau_{i}}
                    \end{pmatrix},
                      }
\label{receivedStream}
\end{align}
Notice that the binary-valued process ${\mathbf{1}}_{\left\{
\tau_i \le t\right\}}$ is measurable w.r.t.
${\mathcal{F}}_t^{{\mathbb{RX}}}$. The MMSE estimate ${\hat{x}}_t$ is
based on knowledge of the multiple sampling policy and all the
information contained in the output of the sensor and so it can
be written as:
    \begin{align*}
    {\hat{x}}_t & = \bbE \left[   x_t {\Big{|}} {\mathbb{RX}}_t \right].
    \end{align*}
The control signal $u_t$ is measurable w.r.t.
${\mathcal{F}}_t^{ {\mathbb{RX}}}$. Typically it is restricted to be of
the certainty-equivalence type as depicted in figure
\ref{bigPicture}. In that case $u_t$ is, in addition, measurable
w.r.t. ${\mathcal{F}}_t^{\hat{x}}$. The exact form of the feedback
control is not important for our work, but it is essential that
both the supervisor and sensor know the feedback control policy (and so has access
to the control waveform $u_0^t$). With this
knowledge, the control waveform is a known
additive component to the state evolution, and hence can be subtracted
out. Therefore, there is no loss of generality in considering only the
uncontrolled plant.
\subsection{MMSE estimation under deterministic sampling}
Consider now a deterministic scheme for choosing the sampling
times. Let the sequence of non-negative and increasing sampling
times be:
    \begin{align*}
    {\mathcal{D}} & = \left\{ d_0, d_1, \ldots  \right\}, \ \ d_0=0,
    \end{align*}
where, the times $d_i$ are all statistically independent of all
data about the state received after time zero. They can depend on
the initial value of the state $x_0$.

We will now describe the MMSE estimate and its variance. Consider
a time $t$ in the semi-open interval $\left[d_i, d_{i+1} \right)$.
We have:
    \begin{align*}
    {\hat{x}}_t & = \bbE \left[   x_t  {\Big{|}} {\mathbb{RX}}_t \right],\\
                & = \bbE \left[   x_t  {\Big{|}} d_i \le t < d_{i+1}, \left\{  \left( d_j , x_{d_j} \right) {\Big{|}} 0 \le j \le i \right\}
                \right],\\
                & = \bbE \left[   x_t  {\Big{|}} d_i \le t < d_{i+1},
                d_i, x_{d_i} \right], \\
                & = \bbE \left[   x_t  {\Big{|}} d_i, x_{d_i} \right],
    \end{align*}
where, we have used the Markov property of the state process and
the mutual independence, conditioned on $x_0$, of the state and
the sequence ${\mathcal{D}}$. Furthermore,
    \begin{align*}
    {\hat{x}}_t & = \bbE \left[  e^{a\left(t-d_i\right)} x_{d_i} + \int_{d_i}^t{e^{a\left(t-s\right)} b~dB_s}
                  + \int_{d_i}^t{e^{a\left(t-s\right)} u_s ds}{\Big{|}} d_i, x_{d_i}
                  \right],\\
                 & = e^{a\left(t-d_i\right)} x_{d_i} + \int_{d_i}^t{e^{a\left(t-s\right)} u_s
                  ds}.
    \end{align*}
Thus, under deterministic sampling, the MMSE estimate obeys a
linear ODE with jumps at the sampling times.
    \begin{align*}
    {\frac{d{{\hat{x}}_t}}{dt}} & = a {\hat{x}}_t + u_t, \
    {\text{for}} \ t \notin {\mathcal{D}}, \quad {\text{and}}, \quad
    {\hat{x}}_{t} \ = \ x_t, \ {\text{if}} \ t \in {\mathcal{D}}.\\
\intertext{The variance $p_t = \bbE \left[  \left( x_t -
{\hat{x}}_t\right)^2
 \right]$ is given by the well-known Riccati equation:}
    {\frac{d{p_t}}{dt}} & = 2ap_t + b^2, \ {\text{for}} \ t \notin {\mathcal{D}},
    \quad  {\text{and}}, \quad
    p_{t} \ = \ 0, {\text{if}} \ t \in {\mathcal{D}}.
    \end{align*}
The above description for the MMSE estimate and its variance is
valid even when the sampling times are random provided that these times
are independent of the state process except possibly the initial
condition. There too, the evolution
equations for the MMSE estimate and its error statistics remain
independent of the policy for choosing the sampling times; the
solution to these equations merely get reset with jumps at these
random times. On the other hand, adaptive sampling modifies the evolution
of the MMSE estimator as we will see next.

\subsection{The MMSE estimate under adaptive sampling}
Between sample times, an estimate
of the state using any distortion criterion is an estimate up to a
stopping time and this is the crucial difference from
deterministic sampling. Denote this estimate by~${\tilde{x}}_t$.
At time $t$ within the sampling interval
$\left[\tau_i, \tau_{i+1} \right)$, the MMSE estimate is given by:
    \begin{align*}
    {\tilde{x}}_t & = \bbE \left[   x_t  {\Big{|}} {\mathbb{RX}}_t  \right],\\
                & = \bbE \left[   x_t  {\Big{|}} \tau_i \le t < \tau_{i+1}, \left\{  \left( \tau_j , x_{\tau_j} \right) {\Big{|}} 0 \le j \le i \right\}
                \right],\\
                & = \bbE \left[   x_t  {\Big{|}} \tau_i \le t < \tau_{i+1},
                ~\tau_i,~x_{\tau_i} \right],  \ \ \ {\text{(Strong Markov
property)}}\\%
                & = x_{\tau_i} + \bbE \left[  x_{t} -  x_{\tau_i} {\Big{|}}   t - \tau_i < \tau_{i+1} - \tau_i,
                ~\tau_i,~x_{\tau_i} \right].
    \end{align*}
Similarly, its variance $p_t$ can be written as:
    \begin{align*}
    {p}_t & = \bbE \left[   {\left(x_t - {\hat{x}}_t\right)}^2
    {\Big{|}} \tau_i \le t < \tau_{i+1},
                ~\tau_i,~x_{\tau_i} \right].
    \end{align*}
Between samples, the MMSE estimate is an estimate up to a stopping
time because the difference of two stopping times is also a
stopping time. In general, it is different from the MMSE estimate
under deterministic sampling (see appendix \ref{appendix}). This
simply means that in addition to the information contained in
previous sample times and samples, there are extra partial
observations about the state. This information is the fact that
the next stopping time $\tau_{i+1}$ has not arrived. Thus, in
adaptive schemes, the evolution of the MMSE estimator is dependent
on the sampling policy. This potentially opens the possibility of
a {\em{timing channel}} \cite{bitsThroughQueues} for the MMSE
estimator.

Figure {\ref{generalETpicture}} describes a particular
(sub-optimal) scheme for picking a single sample. There are two
time-varying thresholds for the state signal, an upper one and a
lower one. The initial state is zero and within the two
thresholds. The earliest time within $[0,T]$, when the state exits
the zone between the thresholds is the sample time. The evolution
of MMSE estimator is dictated by the shape of the thresholds thus
utilizing information available via the timing channel.

\subsection{An optimal stopping problem}
We formalize a problem of sampling for optimal estimation over a
finite horizon.  We seek to minimize the distortion between
the state and the estimate ${\hat{x}}$. 
We conjecture that under optimal sampling,
\begin{align}
 {\tilde{x}}_t & = {\hat{x}}_t \ \ {\text{almost surely}}.
\label{conjecture}
\end{align}
If increments of the state process are not required to
have
symmetric PDFs, clearly
the conjecture is false~(see appendix~A).

 On the interval $[0,T]$, we seek for the state
process (\ref{stateEquation}), with the initial condition $x_0$,
an increasing and causal sequence of at most $N$ sampling times
$\left\{\tau_1, \ldots , \tau_N \right\}$, to minimize the
aggregate squared error distortion:
    \begin{align}
    J \left( T, N \right) & =  \bbE \left[  \int_0^T{ {\left( x_s -
    \hat{x}_s \right) }^2 ds }\right]. \label{distortion}
    \end{align}
Notice that the distortion measure does not depend on the initial
value of the state because, it operates on the error signal ($
 x_t - {\hat{x}}_t$) which is zero at time zero. Notice also that
the communication constraint is captured by an upper limit on the
number of samples. In this formulation, we do not get any reward
for using fewer samples than the budgeted limit.

The optimal sampling times can be chosen one at a time using a
nested sequence of solutions to optimal single stopping time
problems. This is because, for a sampling time $\tau_{i+1}$ which
succeeds the time $\tau_{i}$, using a knowledge of how to choose
the sequence $\left\{ \tau_{i+1}, \ldots , \tau_N \right\}$
optimally, we can obtain an optimal choice for
$\tau_{i}$ by solving over $[0,T]$ the optimal single stopping
time problem
    \begin{gather*}
    {\underset {\tau \ge 0} {\essup}   } \
     \bbE \left[
       \int_0^{\tau_i}{ {\left( x_s - \hat{x}_s \right)}^2 ds} +  J^{*}\left( T - \tau_{i}, N-i
       \right),
    \right]
    \end{gather*}
where, $ J^{*}\left( T - \tau_{i}, N-i \right)$ is the minimum
distortion obtained by choosing $N-i$ sample times
$\left\{\tau_{i+1},\ldots , \tau_{N} \right\}$ over the interval
$[\tau_{i},T]$. The best choice for the terminal sampling time
$\tau_N$ is based on a solving a single stopping problem. Hence we
can inductively find the best policies for all earlier sampling
times. Without loss of generality, we can examine the optimal
choice of the first sampling time $\tau_1$ and drop the subscript
$1$ in the rest of this section.

\subsubsection{The optimal stopping problem and the Snell envelope}
The sampling problem is to choose a single
${\mathcal{F}}^x_t$-stopping time $\tau$ on $[0,T]$ to minimize
    \begin{align*}
    F \left( T, 1 \right) & =  \bbE \left[  \int_0^{\tau} {( x_s -
    \hat{x}_s)}^2 ds   + J^{*} \left( T-\tau, N-1 \right) \right],
    \end{align*}
where,
    \begin{align*}
    J^{*} & = {\underset {\left\{ \tau_2, \ldots , \tau_N \right\}}
                         {\essuper}
              } \  %
    \bbE  \left[ J\left( T-\tau, N-1 \right) \right].
    \end{align*}
%
This is a stopping problem in standard form, and to solve it,
we can use the so-called {\em{Snell envelope}}
(see \cite{karatzas-shreve-finance} Appendix D and also
\cite{peskirShiryaev}):
    \begin{align*}
    S_t & = {\underset {\tau \ge t} {\essuper}   } \hskip 4mm \bbE
            \left[ \int_0^{\tau}{ {\left( x_s-\hat{x}_s \right)}^2  ds}
                    + J^{*} \left( T-\tau, N-1 \right)
                    {\Big{\vert}} {\mathcal{F}}^x_t \right] , \\
        & =    \int_0^t{ {\left( x_s-\hat{x}_s \right)}^2  ds}
                       + {\underset {\tau \ge t} {\essuper}} \hskip 4mm
                       \bbE \left[
                               \int_t^{\tau} {\left( x_s-\hat{x}_s \right)}^2  ds
                               +J^{*} \left( T-\tau, N-1 \right)
                               {\Big{\vert}} x_t
                            \right].
    \end{align*}
Then, the earliest time when the cost of stopping does not exceed
the Snell envelope is an optimal stopping time. Since the Snell
envelope depends only on the current value of the state and the
current time, we get a simple threshold solution for our problem.


\subsection{Extensions to nonlinear and partially observed systems}
When the plant is nonlinear, the MMSE estimate under deterministic sampling is
the mean of the Fokker-Planck equation, and is give by:
    \begin{align*}
    {\xi}_t & = \bbE \left[   x_t  {\Big{|}}
                \tau_{latest}, x_{\tau_{latest}} \right], \ \
                {\text{where}}, \tau_{latest} = \sup{\left\{ d_i \le t
                \right\}}.
    \end{align*}
Under adaptive sampling, this may not be the optimal choice of estimate.
To obtain a tractable optimization problem we can restrict the
kind of estimator waveforms allowed at the supervisor. Using
the Fokker-Planck mean above leads to a tractable stopping problem
as does using the zero-order hold waveform:
    \begin{align*}
    {\xi}_t & = x_{\tau_{latest}}.
    \end{align*}
However, even a slightly more general piece-wise constant
estimate:
    \begin{align*}
    {\xi}_t & = h \left( x_{\tau_{latest}}, \tau_{latest} \right) .
    \end{align*}
leads to a stopping problem of non-standard form because $\tau$ and $h$
have to be chosen in concert.

When the plant sensor has a noisy observations, or in the vector case case,
noisy partial observations, the sampling problem remains unsolved. The
important question now is: {\em{What signal at the sensor should
be sampled ? Should the raw sensor measurements be sampled and
transmitted, or, is it profitable to process them first ?}} We
propose a solution with a separation into local filtering and
sampling. Accordingly, the sensor should compute a continuous
filter for the state. The sufficient statistics for this filter
should take the role of the state variable. This means that the
sensor should transmit current samples of the sufficient
statistics, at sampling times that are stopping times w.r.t. the
sufficient statistics process.

In the case of a scalar linear system with observations corrupted
by white noise, the local Kalman filter at the sensor
${\hat{x}_t^{sensor}}$ plays the role of the state signal. The
Kalman filter obeys a linear evolution equation and so the optimal
sampling policies presented in this paper should be valid. In the
rest of the paper, we will investigate and solve the sampling
problem, first for the Brownian motion process and then for the
Ornstein-Uhlenbeck process.


\section{Sampling Brownian motion\label{brownianMotionSection}}
The sampling problem for Brownian motion with a control term added
to the drift  is no different from the problem without it. This is
because the control process ${\left\{u_t\right\}}_{t\ge 0}$ is
measurable w.r.t. ${\mathcal{F}}^{ {\mathbb{RX}} }_t$, whether it is a
deterministic feed-forward term, or a feedback based on the sampled
information. Thus for the estimation problem, we can safely set
the control term to be zero, to get:
    \begin{align*}
    dx_t & =   b dB_t , \ x_0 = x.
    \end{align*}
The diffusion coefficient $b$ can be assumed to be unity. If it is
not, we can simply scale time, and in the
${\tfrac{t}{b^2}}$-time, the process obeys a SDE driven by a
Brownian motion with a unit diffusion coefficient. We study the
sampling problem under the assumption that the initial state is
known to the MMSE estimator. Under deterministic sampling,
the MMSE estimate for this process is a
zero order hold extrapolation of received samples.

We study three important classes of sampling. The
optimal deterministic one is traditionally used, and it provides an
upper bound on the minimum distortion possible. The first adaptive
scheme we study is Delta sampling which is based on first hitting
times of symmetric levels by the error process. Finally, we
completely characterize the optimal stopping scheme by recursively
solving an optimal multiple stopping problem.

\subsection{Optimal deterministic sampling}
Given that the initial value of the error signal is zero, we will
show through induction that  uniform sampling on the interval
$[0,T]$ is the optimal choice of $N$ deterministic sample times. Call
the deterministic set of sample times as:
    \begin{align*}
    {\mathcal{D}} & = \left\{  d_1, d_2, \ldots, d_N  \
            \vert \ 0 \le d_i \le T, \ \ d_i{i-1} \le d_{i} \
            {\text{for}} \ i = 2,\ldots, N
               \right\}.
    \end{align*}
Then, the distortion takes the form:
    \begin{align*}
    J_{Deter} \left( T , N   \right)
     & = \int_{0}^{d_1}  { \Exp \left( x_s - {\hat{x}}_s\right) }^2 ds
       + \int_{d_1}^{d_2}  {\Exp \left( x_s - {\hat{x}}_s\right) }^2 ds
       + \ldots
       + \int_{d_N}^{T}  { \Exp \left( x_s - {\hat{x}}_s\right) }^2 ds.
    \end{align*}
Consider the situation of having to choose exactly one sample over
the interval $[T_1,T_2]$ with the supervisor knowing the state at
time $T_1$. The best choice of the sample time which minimizes the
cost $ J_{Deter} \left(T_2 - T_1 , ~1  \right)$ is the midpoint
${\tfrac{1}{2}}\left(T_2 + T_1\right)$ of the given interval. We
propose that the optimal choice of $N-1$ deterministic samples
over $[T_1,T_2]$ is the uniform one:
    \begin{align*}
    \left\{ d_1, d_2, \ldots d_{N-1}  \right\} & = \left\{ T_1 +
    i{\frac{T_2 -T_1}{N}} {\Big{\vert}}  i = 1,2, \ldots, N-1
    \right\},
    \end{align*}
which leads to a distortion equalling ${\tfrac{1}{2N}}{\left(
T_2-T_1 \right)}^2$. Let $ J^*_{Deter} \left(  T_2 - T_1 , ~N \right)$
be the minimum distortion over $\left[0,T_2-T_1\right]$ using
$N$ samples generated at deterministic  times.
Now, we carry out the induction step, and
obtain the minimum distortion over the set of $N$ sampling times
over $[T_1,T_2]$ to be:
    \begin{align*}
 & \min_{d_1}  \left\{  \int_{0}^{d_1}  { \left(x_s -
                   {\hat{x}}_s\right) }^2 ds +
                    \min_{ \{ d_2, d_2, \ldots d_{N}\} }
                    J_{Deter} \left( T_2 - T_1 -d_1 , ~N-1  \right)
                    \right\} \\ 
 &    =  \min_{d_1}  \left\{ {\frac{{d_1}^2}{2}} + {\frac{{\left(T_2 - T_1-d_1\right)}^2}{2N}}
                         \right\},\\
 &    =   \min_{d_1} \left\{ {\frac{ Nd_1^2
                                    + d_1^2 -2d_2\left(T_2 - T_1\right)
                 +{\left(T_2 - T_1\right)}^2   }{2N}}  \right\},\\
  &   =   \min_{d_1} \left\{ {\frac{
                          (N+1) {\left( d_1 - {\frac{1}{(N+1)}} \left(T_2 - T_1\right)
                     \right)}^2  +  {\left( 1 - {\frac{1}{(N+1)}} \right)}
                     {\left(T_2 - T_1\right)}^2
                          }
                                       {2N}} \right\},\\
  &   =  {\frac{1}{2(N+1)}}{\left(T_2 - T_1\right)}^2,
    \end{align*}
the minimum being achieved for $d_1 = {\tfrac{1}{N+1}}\left(T_2 -
T_1\right)$. This proves the assertion about the optimality of
uniform sampling among all deterministic schemes provided that the
supervisor knows the value of the state at the start time.

\subsection{Optimal Delta sampling\label{wiener-multiple-level}}
As described before, Delta sampling is a simple event-triggered
sampling scheme which generates a new sample whenever the input
signal differs from the last sample by a pre-specified threshold.
Delta sampling is really meant for infinite horizon problems as
it produces inter-sample intervals that are unbounded. Since we
have on our hands a finite horizon problem, we will use a time-out
at the end time of the problem's horizon. To make the most of this
class of rules, we allow the thresholds to vary with the past
history of sample times. Thus the supervisor can compute the sequence
of thresholds from the record of samples received previously.
Only the sensor can find the actual sample time since it also has full
access to the state signal.


More precisely, at any sampling time as well as at the start of
the horizon, the threshold for the next sampling time is chosen.
This choice is allowed to depend on the number of samples
remaining as well as the amount of time left till the end of the
horizon. We set $\tau_0 = 0$, and define thresholds and sampling
times times recursively. The threshold for the $i^{\text{th}}$
sampling time is allowed to depend on the values of the previous
sampling times, and so it is measurable w.r.t.
${\mathcal{F}}_t^{ {\mathbb{RX}}}$. Assume that we are given the policy
for choosing causally a sequence of non-negative thresholds
$\left\{\delta_1 , \delta_2, \ldots , \delta_N \right\}$. Then,
for $i=1,2,\ldots ,N$, we can characterize the sampling times
$\left\{\zeta_1 , \zeta_2, \ldots , \zeta_N \right\}$ as follows:
    \begin{align*}
    {\mathcal{F}}^{\delta_i} & \subset
                   {\mathcal{F}}^{\left(\tau_1, \ldots ,
                        \tau_{i-1}\right)}  \ {\text{if}} \ i \ > \ 1, \\
    \tau_{i,\delta_i} & = \inf \left\{ t \Bigl| t \ge
                                 \tau_{i-1,\delta_{i-1}},
                           \left\vert  x_t - x_{\tau_{i-1}} \right\vert \ge \delta_i
                           \Bigr. \right\}, \\
    \zeta_i & = \min \left\{ \tau_{i,\delta_i}, T \right\} 
  .
    \end{align*}
The first threshold $\delta_1$ depends only on the length of the 
horizon, namely~$T$.

The optimal thresholds can be chosen one at a time using solutions
to a nested sequence of optimization problems each with a single
threshold as its decision variable. This is because, knowing 
how to choose the sequence $\left\{ \zeta_{i+1}, \ldots , \zeta_{N}  \right\}$ optimally, we can
obtain an optimal choice for $\zeta_{i}$ by solving over
the optimization problem:
    \begin{gather*}
    {\inf_{\delta_i \ge 0}  } \
     \bbE \left[
       \int_0^{\zeta_i}{ {\left( x_s - \hat{x}_s \right)}^2 ds}
       +  J^{*}_{Thresh}\left( T - \zeta_{i},~N-i
                      \right)
    \right],
    \end{gather*}
where the cost function $J^{*}_{Thresh}\left( T - \zeta_{i},~N-i
\right)$ is the minimum aggregated distortion over $[T -
\zeta_{i},T]$ achievable using at most $N-i$ samples generated
using thresholds for the magnitude of the error signal. Hence, if
we know how to generate the last sample efficiently, we can
inductively figure out rules governing the best thresholds for
earlier sampling times.
    \begin{figure}[t]
    \begin{centering}
    \subfigure[Only one sample used
    \label{brownianThresholdsDiagram}]
    {%
    \includegraphics[width=0.45\textwidth]{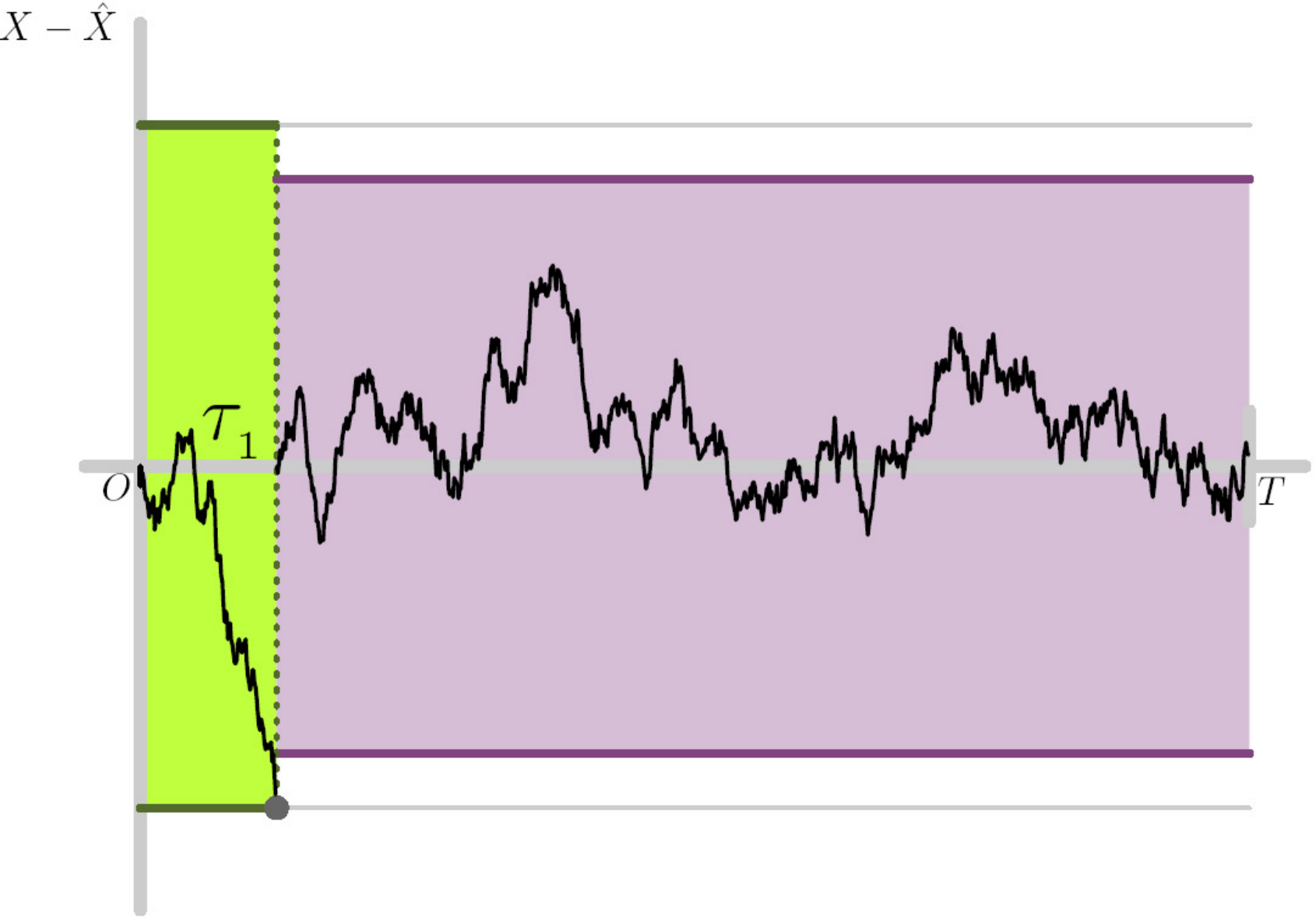}%
    }%
    \subfigure[Distortion as a function of threshold.
    \label{brownianThresholdsBehaviourDiagram}]
    {%
    \includegraphics[width=0.45\textwidth]{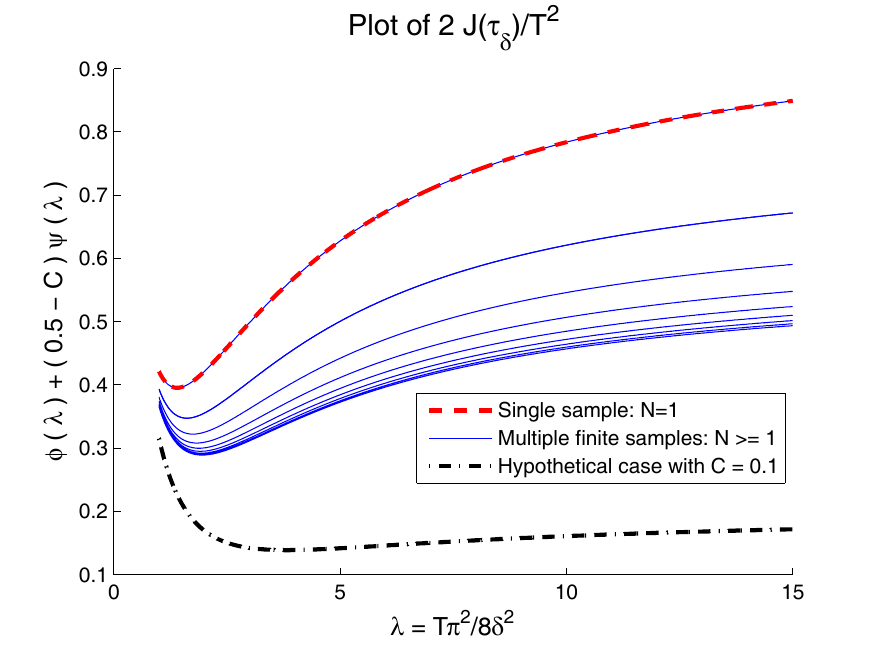}%
    }%
    \caption{In subfigure~(a), we depict Delta sampling. In
    subfigure~(b), We show the estimation distortion due to Delta sampling
    as a function of the threshold used. Notice that for a fixed $\delta$,
    the distortion
    decreases steadily as the maximum number of samples remaining~(N) grows.
    The distortion however never reaches zero. The minimum
    distortion reaches its lower limit of $0.287{\tfrac{T^2}{2}}$.}
    \end{centering}
    \end{figure}
\subsubsection{Optimal level for a single sample}
These computations are carried out in appendices B and C. In particular,
equation \ref{thresholdJforOneSample} gives the expression:
    \begin{align*}
    J_{Thresh} \left( T ,~1 \right) \left( \lambda \right) & =
        {\frac{T^2}{2}} \left\{ 1 +{\frac{\pi^4}{32\lambda^2}}%
          -{\frac{\pi^2}{4\lambda}}%
          -{\frac{\pi}{\lambda^2}}
              \sum_{k\ge 0}{(-1)^{k}} {\frac{
                  e^{-(2k+1)^2\lambda} }{(2k+1)^3}} \right\},%
    \end{align*}
where $\lambda = \frac{T\pi^2}{8\delta^2}$. parametrizing
in terms of  $\lambda$ reveals some structural
information about the solution. Firstly, note that the length of
the time horizon does not directly affect the optimum choice of
$\lambda$. The function
$J_{Thresh}\left( T ,~1 \right)$ has a shape that does not depend
on~$T$. It is merely scaled by the
factor ${\frac{T^2}{2}}$. The behaviour of the distortion as
$\lambda$ is varied can be seen in figure
\ref{brownianThresholdsBehaviourDiagram}. The minimal distortion
incurred turns out to be:
    $$
    c_1{\frac{T^2}{2}} =  0.3952  {\frac{T^2}{2}},
    $$
this being achieved by the choice: $\delta^* = 0.9391 \sqrt{T}$.
As compared to deterministic sampling, whose optimum performance
is $0.5\frac{T^2}{2}$, we realize that we have slightly more than
20\% improvement by using the optimum thresholding scheme.

{\em{How often does the Delta sampler actually generate a sample
?}} To determine that, we need to compute the probability that the
estimation error signal reaches the threshold before the end time
$T$. Using equation \ref{firingProbabilityExpression} provides the
answer: $98\%$. Note that this average sampling rate of the
optimal Delta sampler is independent of the length of the time
horizon.

We have provided a complete characterization of the performance of
the Delta sampler with one allowed sample. In what follows, we
will characterize the behaviour of multiple Delta sampling. In
doing so, we will hit upon the remarkable fact that if the sensor
is allowed two or more samples, it is actually more efficient to
sample deterministically.
\subsubsection{Multiple delta sampling}
Like in the single sample case, we will show that the expected
distortion over $[0,T]$ given at most $N$ samples is of the form
    $$
    c_N {\frac{T^2}{2}}.
    $$
Let $\tau_{\delta}$ be the level-crossing time as before. Then,
given a positive real number $\alpha$, consider the following
cost:
    \begin{align*}
    \Upsilon \left( T , \alpha, \delta \right)
       & {\stackrel{\Delta}{=}}
    \Exp \left[ \int_{0}^{ \tau_{\delta}\wedge T } x_s^2 ds
       + \alpha
       {\left[  { \left(  T - \tau_{\delta}
                             \right) }^{+}   \right]}^{2} \right].
    \end{align*}
Using the same technique as in the single sample case~(precisely,
the calculations between and including equations
\ref{thresholdRunningPlusTerminal},
\ref{singleThresholdFinalForm}), we get:
    \begin{align*}
    \Upsilon \left( T , \alpha, \delta \right)
       & = {\frac{T^2}{2}} -  \delta^2  \Exp  {\left[  {
    \left(  T - \tau_{\delta}
                                                  \right) }^{+}   \right]}
                                      - \left( {\frac{1}{2}} - \alpha
                                                  \right) \Exp  \left[
                             {\left[  { \left(  T - \tau_{\delta}
                             \right) }^{+}   \right]}^{2} \right].
    \end{align*}
Using calculations presented in appendix B, C we can
write~(equation~\ref{thresholdJforManySamples}):
    \begin{align*}
    \Upsilon \left( T , \alpha, \delta \right)
     &=  {\frac{T^2}{2}} \left\{ \phi(\lambda) +  \left[ {\frac{1}{2}} - \alpha
                 \right] \psi(\lambda) \right\}, 
    \end{align*}
where, $\lambda = \frac{T\pi^2}{8\delta^2}$, and we define the
functions $\phi, \psi$ as follows:
    \begin{align*}
     \phi\left(\lambda\right) & {\stackrel{\Delta}{=}} 1 +{\frac{\pi^4}{32\lambda^2}}%
              -{\frac{\pi^2}{4\lambda}}%
              -{\frac{\pi}{\lambda^2}}
                  \sum_{k\ge 0}{ \frac{ {(-1)^{k}}
                      e^{-(2k+1)^2\lambda} }{(2k+1)^3}},\\
\intertext{and,}
     \psi\left(\lambda\right) &{\stackrel{\Delta}{=}}
     -{\frac{5\pi^4}{96\lambda^2}}%
              -{\frac{\pi^2}{2\lambda}}%
              -2  +{\frac{16}{\pi\lambda^2}} \sum_{k\ge 0}{ \frac{ {(-1)^{k}}
                      e^{-(2k+1)^2\lambda} }{(2k+1)^5}}.
    \end{align*}%
The choice of $\lambda$ that minimizes the cost $\Upsilon$  can be
determined by performing a grid search for the minimum of the
scalar function $\phi(\lambda) +  \left[ {\frac{1}{2}} -
\alpha\right] \psi(\lambda)$. Since this sum is a fixed function,
we conclude that the  minimum cost is a {\it fixed} percentage of
${\frac{T^2}{2}}$ exactly as in the case of the single sample.
This property of this optimization problem is what enables us to
determine optimal multiple Delta samplers by induction.

Let us return to the distortion due to $N$ samples generated using
a Delta sampler, with the budget $N$ being at least $2$. If we
have determined the optimal Delta samplers for utilizing a budget
of $N-1$ or less, then the optimal distortion with a budget of $N$
samples takes the form:
    \begin{align*}
    J^*_{Thresh}\left( T ,~N \right)  & =
     {\inf_{\delta_{{}_N} \ge 0}  } \ \bbE \left[
       \int_0^{\zeta_N}{ {\left( x_s - \hat{x}_s \right)}^2 ds}
       +  J^{*}_{Thresh}\left( T - \zeta_{N},~N-1
                      \right)
    \right],\\
    & =
     {\inf_{\delta_{{}_N} \ge 0}  } \ \bbE \left[
       \int_0^{{\tau_{\delta_{{}_N}}\wedge T}}{ {\left( x_s - \hat{x}_s \right)}^2 ds}
       +  J^{*}_{Thresh}\Bigl( {\left(T - \tau_{\delta_{{}_N}}\right)}^+,~N-1
                      \Bigr)
    \right].\\
\intertext{Suppose the sensor is allowed to generate absolutely no
samples at all. Then the distortion at the supervisor will be:
${\frac{T^2}{2}}$. We know that the minimum distortion due to
using a single sample is a fixed fraction of ${\frac{T^2}{2}}$
namely: $c_1{\frac{T^2}{2}}$. We will now deduce by mathematical
induction that the minimum distortions possible with higher sample
budgets are also in the form of fractions of ${\frac{T^2}{2}}$.
Let the positive coefficient $c_k$ stand for the hypothetical
fraction whose product with ${\frac{T^2}{2}}$ is the minimum
distortion $J^*_{Thresh}\left( T ,~k \right)$. Continuing the
previous set of equations, we get:}
    J^*_{Thresh}\left( T ,~N \right) & =
    {\inf_{\delta_{{}_N} \ge 0}  } \  \bbE \left[
       \int_0^{{\tau_{\delta_{{}_N}}\wedge T}}
       { {\left( x_s - \hat{x}_s \right)}^2 ds}
       +  c_{N-1} {\left[ {\left(T -
       \tau_{\delta_{{}_N}}\right)}^+
                      \right]}^2\right],\\
    & = {\inf_{\delta_{{}_N} \ge 0}  } \  \Upsilon
    \Bigl( T , c_{N-1}, \delta_{{}_N} \Bigr), \\
    & = \ {\frac{T^2}{2}} \quad {\inf_{\lambda_N = \frac{T\pi^2}{8\delta_{{}_N}^2} } }
                \ \left\{ \phi(\lambda_N) +
                 \left[ {\frac{1}{2}} - c_{N-1}
                 \right] \psi(\lambda_N) \right\}.
    \end{align*}
Because of the scale-free nature of the functions $\phi , \psi$,
we have proved that the minimum distortion is indeed a fixed
fraction of ${\frac{T^2}{2}}$.
Figure~\ref{brownianThresholdsBehaviourDiagram} shows for different
values of~$N$, the graph of $J_{Thresh}\left(
T ,~N \right)$ as a function of $\lambda$. 
behaviour of the sum functions to be considered for efficient
multiple Delta sampling. The last equation gives us following
recursion for $k=1,2, \ldots , N$:
    \begin{gather}
    {\boxed{
    \begin{aligned}
    c_k & = \inf_{\lambda}  \left\{   \phi(\lambda) +
             (0.5-c_{k-1})\psi(\lambda)    \right\},
    &  \ & \ & \rho_k  & = {\frac{\pi}{2\sqrt{2\lambda_k^*}}}, \ {\text{and,}}\\
    \lambda_k^*  & = \arg\inf_{\lambda}  \left\{   \phi(\lambda) +
             (0.5-c_{k-1})\psi(\lambda)    \right\},
    &  \ & \ & \delta_k^* & = \rho_{N-k+1} \sqrt{T-\zeta_{k-1}}.
    \end{aligned}
    }}
    \label{optimalThresholdRecursion}
    \end{gather}
Now we determine the expected sampling rate of the optimal Delta
sampler. Let $\Xi_k$ be the random number of samples generated
before $T$ by the Delta sampler with a budget of $k$ samples. Then
almost surely, $\Xi_k$ equals the number of threshold crossings
generated by this sampler. Clearly, we have the bounds: $0\le
\Xi_k \le k$. Also, under optimal Delta sampling, the statistics
of the sampling rate do not depend on the length of the
time-interval $T$ as long as the latter is nonzero. This gives us
the recursion:
    \begin{align}
    \Exp \left[ \Xi_k   \right] & = \ 0 \cdot {\mathbb{P}}{\left[
    \tau_{{}_{\delta^*_k}}  \ge T \right]} \ + \ \left( 1 +
    {\mathbb{E}}{\left[ \Xi_{k-1} \right]} \right)\cdot
    {\mathbb{P}}{\left[ \tau_{{}_{\delta^*_k}} < T \right]}.%
    \label{samplingRateRecursion}
    \end{align}
We have derived the performance characteristics of optimal Delta
sampling with a fixed sample budget. The parameters describing the
performance are tabulated in \ref{tableDeltaSampling} for small
values of the budget.
    \begin{table}[h!]
      \centering
    \begin{tabular}{|l||l|l|l|l|l|} \hline
    $N$&1&2&3&4&5\\
    \hline\hline %
    $c_N$  &0.3953&0.3471&0.3219&0.3078&0.2995\\ \hline
    $\rho_N$ &0.9391&0.8743&0.8401&0.8208&0.8094\\ \hline
    $\Exp\left[\Xi_N\right]$    &0.9767&1.9306&2.8622&3.7541&4.4803\\
    \hline
    \end{tabular}
    \caption{Characteristics of Optimal Multiple Delta sampling for
    small values of the sample budget.}\label{tableDeltaSampling}
    \end{table}
    \begin{figure}[t]
    \begin{centering}
    \subfigure[Firing rate of Delta sampling]%
    {%
    \includegraphics[width=0.45\textwidth]{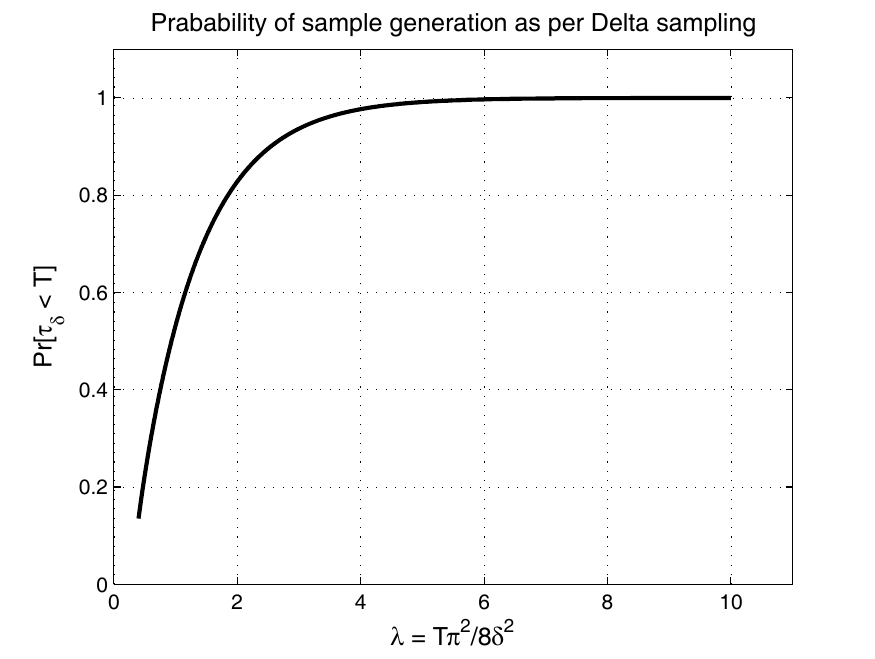}%
    }%
    \subfigure[Average rate of sample generation]%
    {%
    \includegraphics[width=0.45\textwidth]{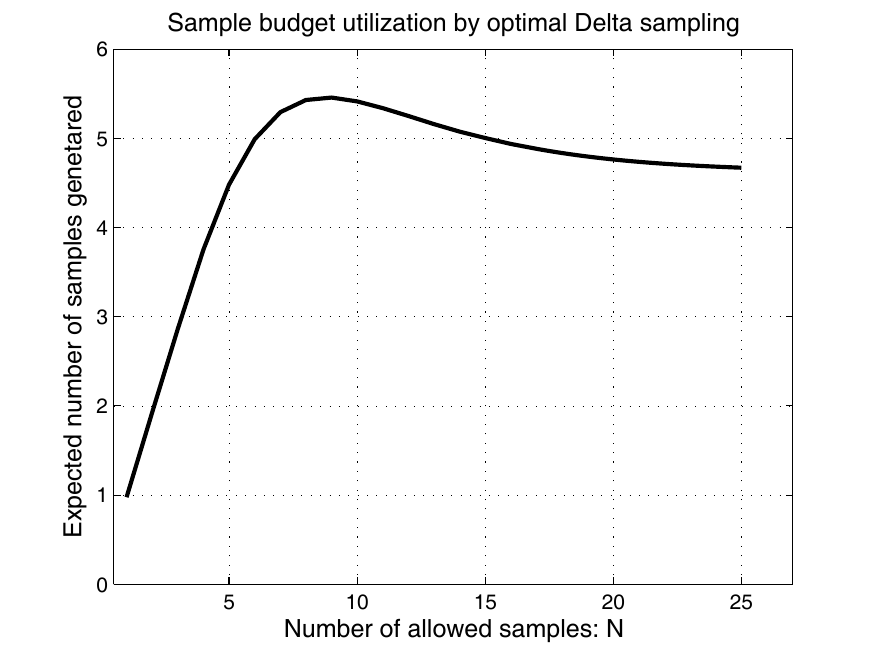}%
    }%
    \caption{Subfigure (a) shows the probability that a sample is generated
    ($\Xi$) as a function of the parameter $\lambda$, which is inversely related
    to the square of the threshold $\delta$.  Subfigure (b) demonstrates the fact
    that delta sampling is meant for repeated sampling over infinite horizons.
    The utilization of the sample budget by the optimal delta sampling scheme
    utilizes is not monotonic with the size of the budget and is actually
    somewhat counter-intuitive. Over any fixed finite horizon,  it uses fewer than
    six samples on average.}
    \label{brownianThresholdsFiringRates}
    \end{centering}
    \end{figure}
To understand the behaviour of optimal Delta sampling when the
sample budget is larger than five, look at subfigure (b) in
\ref{brownianThresholdsFiringRates} as well as figure
\ref{brownianComparisons}. The minimum distortion decreases with
increasing sample budgets but it does not decay to zero. It
stagnates at approximately $0.3{\frac{T^2}{2}}$ no matter how
large a budget is provided. The expected number of samples is not
monotonic in its dependence on the budget. It settles to a value
close to $4.5$. Clearly, Delta sampling which is optimal for the
infinite horizon version of the estimation problem is far from
optimal in the finite horizon version. In fact if the sample
budget is at least two, then, even deterministic sampling performs
better. There is hence a need for determining the optimal sampling
policy.

In optimal Delta sampling, the sensor chooses a sequence of
thresholds to be applied on the estimation error signal. The
choice of a particular threshold is made at the time of the
previous sample and is allowed to depend on the past history of
sample times. {\em{Suppose now that the sensor is allowed to
modify this choice causally and continuously at all time
instants}}. Then we get a more general class of sampling policies
with a family of continuously varying envelopes for the estimation
error signal. This class of policies happens to contain the optimal sampling
policy which achieves the minimum possible distortion. Next, we
will obtain the optimal family of envelopes by studying the
problem of minimum distortion as an optimal multiple stopping
problem.

\subsection{Optimal sampling}
Now, we go after the optimal multiple sampling policy. Consider
the non-decreasing sequence: $\left\{ \tau_1, \tau_2, \ldots ,
\tau_{{}_N} \right\}$ with each element lying within $[0,T]$. For
this to be a valid sequence of sampling times, its elements have
to be stopping times w.r.t. the $x$-process. We will look for the
best choice of these times through the optimization:
    \begin{align*}
    J^{*}\left( T ,~N\right)
     & =  {\underset {\left\{ \tau_1, \tau_2, \ldots , \tau_N \right\}}
                         {\essup}
              } \ %
    \bbE  \left[
        \int_0^{\tau_1} {x_s}^2 ds
      + \int_{\tau_1}^{\tau_2} {( x_s -\hat{x}_{\tau_{1}})}^2 ds
      + \cdots \right. \\
    & \hskip 25mm \left. + \int_{\tau_{{}_N}}^{T} {( x_s - \hat{x}_{\tau_{{}_N}})}^2 ds
         \right] .
    \end{align*}
The solution to this optimization parallels the developments for
Delta sampling. In particular, the minimum distortion obtained by
optimal sampling will turn out to be a fraction of
${\frac{T^2}{2}}$. We will recursively obtain optimal sampling
policies by utilizing the solution to the following optimal
(single) stopping problem concerning the objective function
$\chi$:
    \begin{align*}
    {\underset{\tau}{\essup}} \ \chi\left( T ,\beta, \tau
                         \right)
    & =  {\underset{\tau}{\essup}} \  %
    \bbE  \left[ \int_0^{\tau} {x_s}^2 ds
         + {\frac{\beta}{2}}{\left( T - \tau \right)}^2 \right],
    \end{align*}
where, $\tau$ is a stopping time w.r.t. the $x$-process that lies
in the interval $[0,T]$, and, $\beta$ is a positive real number.
We reduce this stopping problem into one having just a terminal
cost using the calculations between and including equations
\ref{thresholdRunningPlusTerminal},
\ref{singleThresholdFinalForm}:
    \begin{align*}
    \chi\left( T ,\beta, \tau
                         \right)
    & = {\frac{T^2 }{2}}
        - \Exp \left[
                   2x_{\tau}^2 \left(T-\tau\right) + \left(1-\beta\right)
                           {\left[ \left( T-\tau\right) \right]}^2
              \right],
    \end{align*}
which can be minimized by solving the following optimal stopping
problem:
    \begin{gather*}
       {\underset{\tau}{\essuper}}  \ \Exp \left[
                   2x_{\tau}^2 \left(T-\tau\right) + \left(1-\beta\right)
                           {\left[ \left( T-\tau\right) \right]}^2
              \right]
    \end{gather*}
This stopping problem can be solved explicitly by determining it
Snell envelope process. We look for a ${C}^2$ function $g
\left(x,t\right)$ which satisfies the Free boundary PDE system:
    \begin{align}
     {\frac{1}{2}}g_{xx} + g_t & = 0, \ {\text{and}} \quad\quad
     &%
     g \left(x,t\right) & \ge 2x^2 \left(T-t\right)
            + \left(1-\beta\right){\left(T-t\right)}^2.
     \label{freeBoundaryPDE}
    \end{align}
Given a solution $g$, consider the process:
    \begin{align*}
     S_t &{\stackrel{\Delta}{=}} g \left(x_t,t\right).
    \end{align*}
This is in fact the Snell envelope. To see that, fix a
deterministic time $t$ within $[0,T]$ and verify using It\^{o}'s
formula that:
    \begin{align*}
    \Exp\left[S_\tau(x_\tau)|x_t\right]-S_t=\Exp\left[\int_t^\tau
    dS_t|x_t\right]=0,
    \end{align*}
for any stopping time $\tau \in [t,T]$, and hence,
    \begin{align*}
    S_t  & = \Exp\left[S_\tau|x_t\right] \
    \ge \ \Exp\left[ x_\tau^2(1-\tau)|\tau \ge t, x_t\right].
    \end{align*}
The last equation confirms that $S_t$ is indeed the Snell
envelope. Consider the following solution to the Free-boundary PDE
system:
\begin{equation*}
g(x,t) = A \left\{
                       {\left( T-t \right) }^2 + 2 x^2
                      \left( T-t \right) + {\frac{x^4}{3}}
           \right\}.
\end{equation*}
where $A$ is a constant chosen such that $g(x,t) - 2x^2(T-t) -
(1-\beta)(T-t)^2$ becomes a perfect square. The only possible
value for $A$ then is:
\begin{equation*}
{\frac{ (5+\beta) - \sqrt{{(5+\beta)}^2 - 24} }{4}}.
\end{equation*}
Then an optimal stopping time is the earliest time the reward
obtained by stopping now equals or exceeds the Snell envelope:
\begin{equation*}
\begin{split}
\tau^* & = \inf_t \left\{ t: S_t \le 2x_t^2(T-t) +
                               (1-\beta)(T-t)^2 \right\}, \\
& =  \inf_t \left\{ t: x_t^2 \ge \sqrt{ {\frac{3(A-1+\beta)}{A}}
}(T-t) \right\},
\end{split}
\end{equation*}
and the corresponding minimum distortion becomes
    \begin{equation*}
    \chi^*\left( T ,\beta
                         \right)= (1-A) {\frac{T^2}{2}}.
    \end{equation*}
    \begin{figure}[t]
    \begin{centering}
    \includegraphics[width=0.5\textwidth]{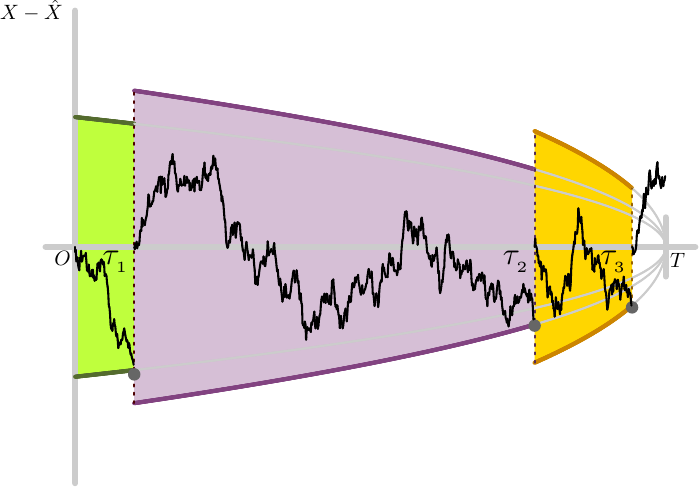}
    \caption{Parabolic envelopes for the estimation error signal under the
    optimal sampling scheme. 
    For each successive sample, the envelope becomes wider.}
    \label{brownianOptimalEnvelopesDiagram}
    \end{centering}
    \end{figure}
We now examine the problem of choosing optimally a single sample.
\subsubsection{Optimal choice of a single sample}
The minimum distortion due to using exactly one sample is:
    \begin{align*}
    J^{*}\left( T ,~1\right)
     & =   {\underset{\tau_1}{\essup}} \ %
    \bbE  \left[
        \int_0^{\tau_1} {x_s}^2 ds
      + \int_{\tau_1}^{T} {( x_s -\hat{x}_{\tau_{1}})}^2 ds
       \right], \\
    & =   {\underset{\tau_1}{\essup}} \ %
    \bbE  \left[
        \int_0^{\tau_1} {x_s}^2 ds
      + {\frac{1}{2}} {\left( T- {\tau_{1}} \right)}^2 ds
       \right],\\
    & =   {\underset{\tau_1}{\essup}} \ %
    \chi\left( T ,1, \tau_1
                         \right).
    \end{align*}
We have thus reduced the optimization problem to one whose
solution we already know. Hence, we have:
\begin{align*}
\tau_{1}^* & =   \inf_{t \ge 0}  \left\{ t :  { x_t }^2  \ge
\sqrt{3}\left(T-t\right) \right\}, \ {\text{and}}, \quad
&
J^{*}\left( T ,~1\right) & =
\left(\sqrt{3}-1\right){\frac{T^2}{2}}.
\end{align*}
\subsubsection{Optimal multiple sampling}
We obtain the family of policies for optimal multiple sampling by
mathematical induction. Suppose that the minimum distortions due
to using no more than $k-1$ samples over $[0,T]$ is given by the
sequence of values $\left\{\theta_{1}{\frac{T^2}{2}}, \ldots ,
\theta_{k-1}{\frac{T^2}{2}}\right\}$. Then consider the minimal
distortion due to using up to $k$ samples:
    \begin{align*}
    J^{*}\left( T ,~k\right)
     & =   {\underset{ \tau_1 }{\essup}} \ %
    \bbE  \left[
        \int_0^{\tau_1} {x_s}^2 ds
      +     J^{*}\left( T-\tau_1 ,~k-1\right)
       \right], \\
    & =   {\underset{\tau_1}{\essup}} \ %
    \bbE  \left[
        \int_0^{\tau_1} {x_s}^2 ds
      + {\frac{1-\theta_{k-1}}{2}} {\left( T- {\tau_{1}} \right)}^2 ds
       \right],\\
    & =   {\underset{\tau_1}{\essup}} \ %
    \chi\left( T ,\theta_{k-1}, \tau_1
                         \right).
    \end{align*}
This proves the hypothesis that the minimum distortions for
increasing values of the sample budget form a sequence with the
form: ${\left\{\theta_{k}{\frac{T^2}{2}}\right\}}_{k\ge 1}$. The
last equation also provides us with the recursion which is started
with $\theta_0 = 1$:
\begin{gather}
{\boxed{
\begin{aligned}
\theta_k & =  1 - {\frac{ (5+\theta_{k-1}) -
\sqrt{{(5+\theta_{k-1})}^2 - 24}}{4}} , \\
\gamma_k & =
\sqrt{{\frac{3(\theta_{k-1}-\theta_k)}{1-\theta_{k}}}}, \\
\tau_{k}^* & =   \inf_{t \ge \tau_k}  \left\{ t :  {\left( x_t -
                                    x_{\tau_k}  \right)}^2  \ge
                                    \gamma_{N-k+1} {T-t}
                                    \right\}.
\end{aligned}
}}
\end{gather}
\subsection{Comparisons}
    \begin{figure}[t]
    \begin{centering}
    \subfigure[Comparisons]%
    {%
    \includegraphics[width=0.45\textwidth]{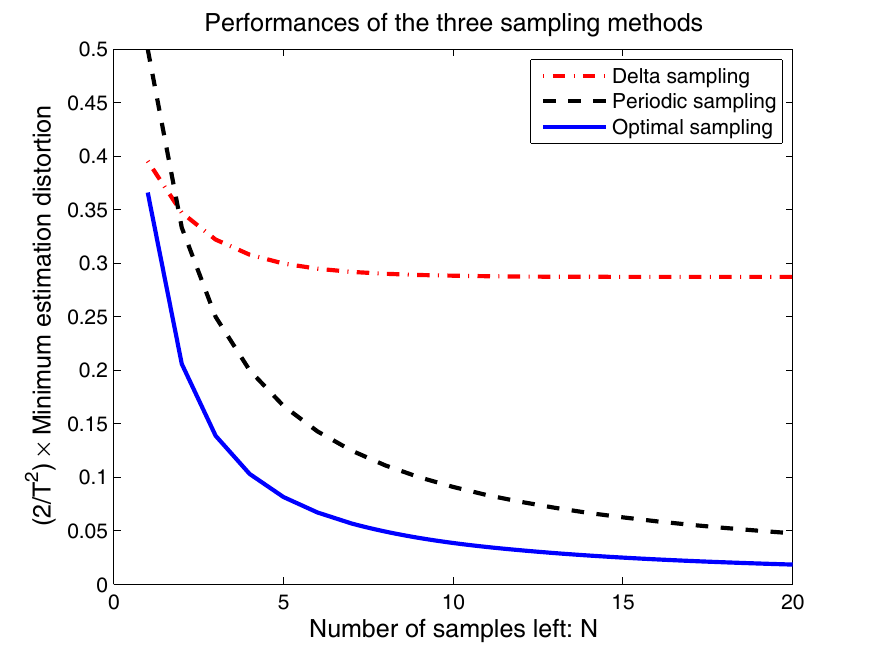}%
    }%
    \subfigure[Optimal vs Periodic]%
    {%
    \includegraphics[width=0.45\textwidth]{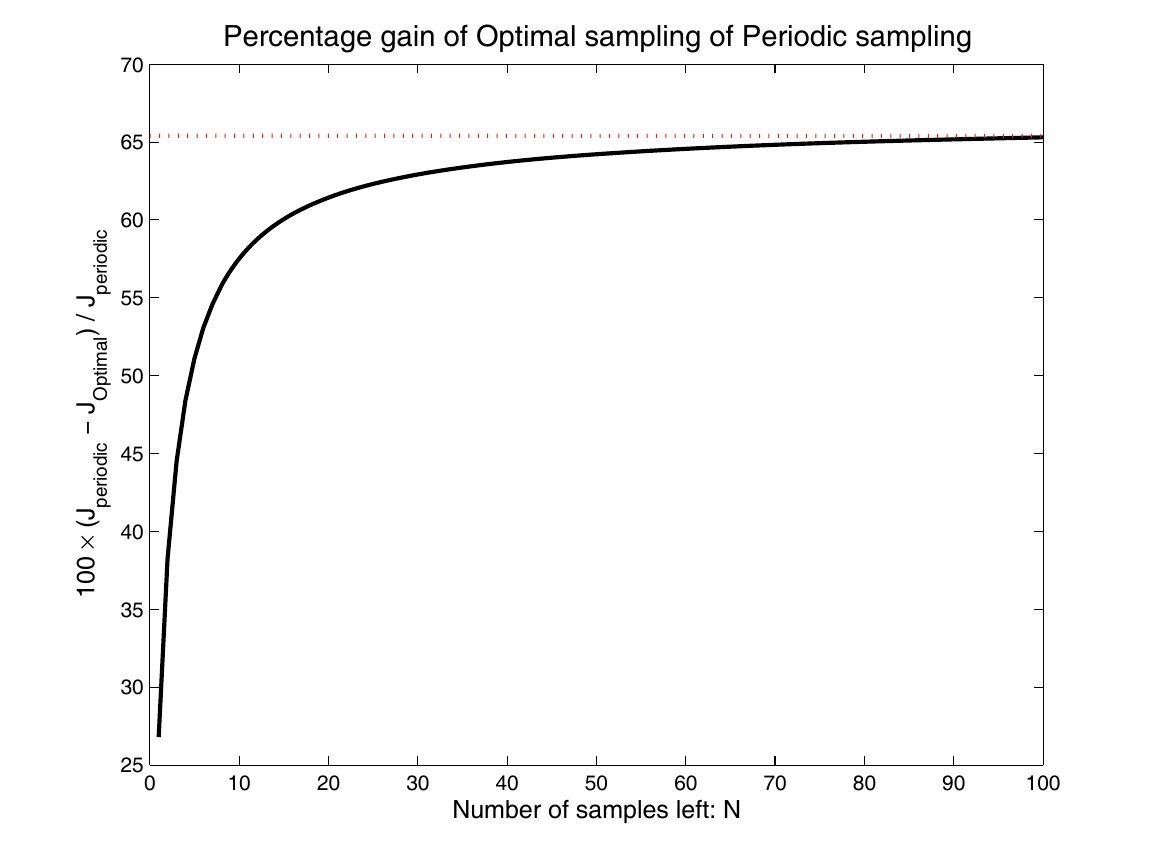}%
    }%
    \caption{The minimum distortions offered by the three sampling methods.
    As the number of allowed samples grows, the normalized reduction of the
    distortion of optimal sampling from periodic sampling keeps growing up
    to a limit of 67\%.}
    \label{brownianComparisons}
    \end{centering}
    \end{figure}
In figure~\ref{brownianComparisons} we have a comparison of the
estimation distortions incurred by the three sampling strategies.
The remarkable news is that
Delta sampling which is optimal for the infinite horizon version
of the estimation problem is easily beaten by the best
deterministic sampling policy. There is something intrinsic to
Delta sampling which makes it ill suited for finite horizon
problems with hard budget limits. This also means that it is not
safe to settle for `natural' event-triggered sampling policies
such as Delta sampling. Also, notice that the relative gain of
optimal sampling over periodic sampling consistently grows to
about $67\%$.


\section{Sampling the Ornstein-Uhlenbeck process\label{ornsteinUhlenbeckSection}}

Now we turn to  the case when  the signal is an Ornstein-Uhlenbeck
process:
\begin{equation}
dx_t = a x_t dt + d{W_t}, \ \ t \in [0,T], \label{ou-model}
\end{equation}
with $x_0 = 0$ and $W_t$ being a standard Brownian motion. Again,
the sampling times S = $\{ \tau_1, \ldots , \tau_N  \}$ have to be
an increasing sequence of stopping times with respect to the
$x$-process. They also have to lie within the interval $[0,T]$.
Based on the samples and the sample times, the supervisor
maintains an estimate waveform $\hat{x_t}$ given by
\begin{equation}
\hat{x_t} =
\begin{cases}
0 &  {\text{if}} \ 0 \le t < \tau_1, \\
{x}_{\tau_i} e^{a(t-\tau_i)} &  {\text{if}} \ \tau_i \le t < \tau_{i+1} \le \tau_N, \\
{x}_{\tau_N} e^{a(t-\tau_N)} &  {\text{if}} \ \tau_N \le t \le T.
\end{cases}
\end{equation}
The quality of this estimate is measured by the aggregate squared
error distortion:
\begin{align*}
J^{*}\left( T ,~N\right)& = {\mathbb{E}} \left[ \int_0^T{\left(
x_s - \hat{x}_s\right)}^2 ds
                    \right].
\end{align*}
\subsection{Optimal deterministic sampling}
Just like in the case of Brownian motion, We can show through
mathematical induction that uniform sampling on the interval
$[0,T]$ is the optimal deterministic choice of $N$ samples: For
the induction step, we assume that the optimal choice of $N-1$
deterministic samples over $[T_1,T_2]$ is the uniform one:
\begin{equation*}
\left\{ d_1, d_2, \ldots d_{N}  \right\} = \left\{ T_1 +
i{\frac{T_2 -T_1}{N+1}}
                                         {\Big{\vert}}  i = 1,2, \ldots, N \right\}.
\end{equation*}
The corresponding minimum distortion becomes:
\begin{equation*}
 \left(  N+1 \right) {\frac{e^{2a{\frac{T_2-T_1}{N+1}}} -1}{4a^2}}  -
{\frac{1}{2a}} \left(T_2-T_1\right).
\end{equation*}
\subsection{Optimal Delta sampling}
We do not have an analytical characterization of the performance
of Delta sampling. Let us first address the single sample case.
The performance measure then takes the form
\begin{eqnarray*}
J_{Thresh}\left( T
,~1\right)&=&\Exp\left[\int_0^{\zeta_1}x_t^2+\int_{\zeta_1}^T\left(x_t-\hat{x}_t
\right)^2\,dt\right],\\
&=&\Exp\left[\int_0^{T}x_t^2-2 \int_{\zeta_1}^T x_t\hx_t\,dt +
\int_{\zeta_1}^T(\hx_t)^2\,dt\right].
\end{eqnarray*}
Now notice that the second term can be written as follows
$$
\Exp\left[\int_{\zeta_1}^T
x_t\hx_t\,dt\right]=\Exp\left[\int_{\zeta_1}^T
\Exp[x_t|\cF_{\zeta_1}]\hx_t\,dt\right]=
\Exp\left[\int_{\zeta_1}^T (\hx_t)^2\,dt\right],
$$
where we have used the strong Markov property of $x_t$, and that
for $t>\zeta_1$ we have $\Exp[x_t|\cF_{\zeta_1}]=x_\zeta
e^{-a(t-\zeta_1)}=\hx_t$. Because of this observation the
performance measure takes the form
\begin{align*}
J_{Thresh}\left( T ,~1\right) & =  \Exp\left[ \int_0^T
x_t^2\,dt-\int_{\zeta_1}^T(\hx_t)^2\,dt
\right],\\
&={\frac{e^{2aT}-1-2aT}{4a^2}} - \Exp\left[
x_{\zeta_1}^2\frac{e^{2a(T-\zeta_1)}-1}{2a}
\right],\\
&=T^2\left\{\frac{e^{2aT}-1-2aT}{4(aT)^2} - \Exp\left[
\frac{x_{\zeta_1}^2}{T}\frac{e^{2(aT)(1-\zeta_1/T)}-1}{2(aT)}
\right]\right\},\\
&=T^2\left\{\frac{e^{-2\bar{a}}-1+2\bar{a}}{4\bar{a}^2} -
\Exp\left[
-{\bar{x}_{\bar{\zeta_1}}^2}\frac{e^{2\bar{a}(1-\bar{\zeta_1})}-1}{2\bar{a}}
\right]\right\},
\end{align*}
where,
\begin{equation}
\bar{t}=\frac{t}{T};~~\bar{a}=aT;~~\bar{x}_{\bar{t}}=\frac{x_{\frac{t}{T}}}{\sqrt{T}}.
\label{eq2p}
\end{equation}
We have $\bar{x}$ satisfying the following SDE:
$$
d\bar{x}_{\bar{t}}=-\bar{a}\bar{x}_{\bar{t}}d\bar{t}+dw_{\bar{t}}.
$$
This suggests that, without loss of generality, we can limit
ourselves to the normalized case $T=1$ since the case $T\ne1$ can
be reduced to the normalized one by using the transformations in
(\ref{eq2p}).
In fact, we can solve the single sampling problem on $[0,1]$ to
minimize:
\begin{align}
J_{Thresh}\left( 1 ,~1\right)  &=\left\{\frac{e^{-2a}-1+2a}{4a^2}
- \Exp\left[ -{x_{\zeta_1}^2}\frac{e^{2a(1-\zeta_1)}-1}{2a}
\right]\right\}. \label{reducedExpressionOU}
\end{align}

We carry over the definitions for threshold sampling times from
section \ref{wiener-multiple-level}. We do not have series
expansions like for the case of the Wiener process. Instead we
have a computational procedure that involves solving a PDE initial
and boundary value problem~\cite{kushnerNumericalMethodsBook}. We have a nested sequence of
optimization problems. The choice at each stage being the non-zero
level $\delta_i$. For $N=1$, the distortion corresponding to a
chosen $\delta_1$ is given by:
\begin{align*}
{\frac{e^{2a}-1 -2a}{4a^2}}
 - {\frac{\delta_1^2}{2a}} \Exp \left[ e^{2a(1-\zeta_1)}-1 \right]
 &
= {\frac{e^{2a}-1-2a }{4a^2}}
 - {\frac{\delta_1^2}{2a}}\left\{ e^{2a}\left( 1+2aU^1(0,0)\right)-1\right\},
\end{align*}
where the function $U^1(x,t)$ defined on $[ -\delta_1, \delta_1 ]
\times [0,1]$ satisfies the PDE:
\begin{equation*}
{\frac{1}{2}}U_{xx}^1 + a x U_x^1 + U_t^1 + e^{-2at} = 0,
\end{equation*}
along with the boundary and initial conditions:
\begin{equation*}
\begin{cases}
U^1(-\delta_1,t) = U^1(\delta_1,t) = 0 \ \ & \ \ {\text{for}} \ \ t \in [0,1], \\
U^1(x,1) = 0 \ \ & \ \ {\text{for}} \ \ x \in [ -\delta_1,
\delta_1 ].
\end{cases}
\end{equation*}
We choose the optimal $\delta_1$ by computing the resultant
distortion for increasing values of $\delta_1$ and stopping when
the cost stops decreasing and starts increasing. Note that the
solution $U(0 , t )$ to the PDE also furnishes us with  the performance
of the $\delta_1$-triggered sampling over $[t,1]$. We will use
this to solve the multiple sampling problem.

Let the optimal policy of choosing $N$ levels for sampling over
$[T_1,1]$ be given where $0\le T_1 \le 1$. Let the resulting
distortion be also known as a function of $T_1$. Let this known
distortion over $[T_1,1]$ given $N$ level-triggered samples be
denoted $J^*_{Thresh}\left( 1 - T_1 ,~N\right)$. Then, the $N+1$
sampling problem can be solved as follows. Let $U^{N+1}(x,t)$
satisfy the PDE:
\begin{equation*}
{\frac{1}{2}}U_{xx}^{N+1} + a x U_x^{N+1} + U_t^{N+1}  = 0,
\end{equation*}
along with the boundary and initial conditions:
\begin{equation*}
\begin{cases}
U^{N+1}(-\delta_1,t) = U^{N+1}(\delta_1,t) = J^*_{Thresh}\left( 1
- t ,~N\right) \ \
& \ \ {\text{for}} \ \ t \in [0,1], \\
U^{N+1}(x,1) = 0 \ \ & \ \ {\text{for}} \ \ x \in [ -\delta_1,
\delta_1 ].
\end{cases}
\end{equation*}
Then the distortion is
given by:
\setlength{\multlinegap}{0pt}
\begin{multline*}
{\frac{e^{2a}-1-2a}{4a^2}}
 - {\frac{\delta_1^2}{2a}} \Exp \left[ e^{2a(1-\zeta_1)}-1  +
   {\frac{e^{2a (1-\zeta_1) }-1 }{4a^2}} - {\frac{1-\zeta_1}{2a}} \right]
+  \Exp \left[ J^*_{Thresh}\left( 1 - \zeta_1 ,~N\right) \right]
\\ = {\frac{e^{2a}-1 }{4a^2}} - {\frac{1}{2a}}
 - {\frac{\delta_1^2}{2a}}\left\{ e^{2a}\left(
 1+2aU^1(0,0)\right)-1\right\}
 + U^{N+1}(0,0).
\end{multline*}
We choose the optimal $\delta_1$ by computing the resultant
distortion for increasing values of $\delta_1$ and stopping when
the distortion stops decreasing.

\subsection{Optimal Sampling}
We do not have analytic expressions for the minimum distortion
like in the Brownian motion case. We have a numerical computation
of the minimum distortion by finely discretizing time and solving
the discrete-time optimal stopping problems.

By discretizing time, we get random variables $x_1,\ldots,x_M$,
that satisfy the AR(1) model below. For $1 \le n \le M$,
$$
x_n=e^{a\delta} x_{n-1} + w_n,~~w_n \sim
{\mathcal{N}}\left(0,\frac{e^{2a\delta} -1}{2a}\right);~~1\le n\le
M.
$$
The noise sequence $\{w_n\}$ is i.i.d. and Gaussian.

Sampling exactly once in discrete time means selecting a sample
$x_\nu$ from the set of $M+1$ sequentially available random
variables $x_0,\ldots,x_M$, with the help of a stopping time
$\nu\in\{0,1,\ldots,M\}$. We can define the optimum cost to go
which can be analyzed as follows. For $n=M,M-1,\ldots,0$, using
equation~(\ref{reducedExpressionOU}),
\begin{eqnarray*}
V_n^1(x)&=&\sup_{n\le\nu\le M} \Exp\left[
{x_{\nu}^2}\frac{e^{2{a}\delta(M-\nu)}-1}{2{a}}\Big|x_n=x\right]\nonumber,\\
&=&\max\left\{ {x^2}\frac{e^{2{a}\delta(M-n)}-1}{2{a}}, \
\Exp[V_{n+1}^1(x_{n+1})|x_n=x] \right\}.
\end{eqnarray*}
The above equation provides a (backward) recurrence relation for
the computation of the single sampling value function $V_n^1(x)$.
Notice that for values of $x$ for which the l.h.s.
 exceeds the r.h.s. we stop and sample, otherwise
we continue to the next time instant. We can prove by induction
that the optimum policy is a {\em{time-varying threshold}} one.
Specifically for every time $n$ there exists a threshold
$\lambda_n$ such that if $|x_n|\ge\lambda_n$ we sample, otherwise
we go to the next time instant. The numerical solution of the
recursion presents no special difficulty if $a\le 1$. For $a > 1$,
we need to use careful numerical integration schemes in order to
minimize the computational errors~\cite{kushnerNumericalMethodsBook}. If
$V_n^1(x)$ is sampled in $x$ then this function is represented as
a vector. In the same way we can see that the conditional
expectation is reduced to a simple matrix-vector product. Using
this idea we can compute numerically the evolution of the
threshold $\lambda_t$ with time. The minimum expected distortion
for this single sampling problem is:
$$
{\frac{e^{2aT}-1-2aT}{4a^2}} - V_0^1(0).
$$

For obtaining the solution to the $N+1$-sampling problem, we use
the solution to the $N$-sampling problem. For $n = M, M-1, \ldots
0$,
\begin{align*}
V_n^{N+1}(x)& = \sup_{n\le\nu\le M}
     \Exp\left[ V_{\nu}^N(0) +
             {x_{\nu}^2}\frac{e^{2{a}\delta(M-\nu)}-1}{2{a}}\Big|x_n=x\right]\nonumber,\\
& = \max\left\{  V_n^N(0) +
{x^2}\frac{e^{2{a}\delta(M-n)}-1}{2{a}},
                 V_{n+1}^N(0) + \Exp \left[ V_{n+1}^1(x_{n+1})|x_n=x \right]
          \right\}    .
\end{align*}
\subsection{Comparisons}
    \begin{figure}[t]
    \begin{centering}
    \subfigure[A stable case]%
    {%
    \includegraphics[width=0.45\textwidth]{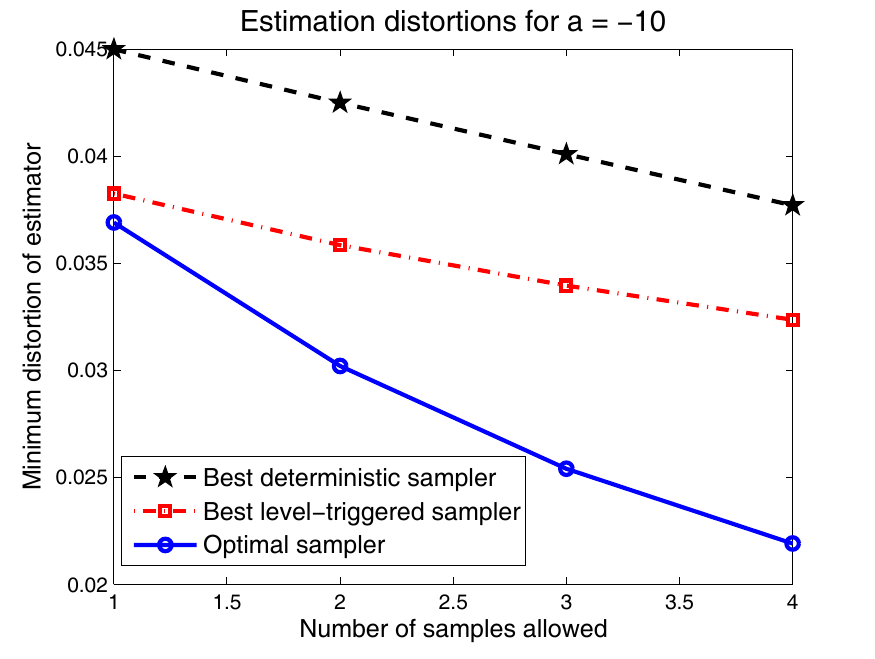}%
    }%
    \subfigure[A stable case]%
    {%
    \includegraphics[width=0.45\textwidth]{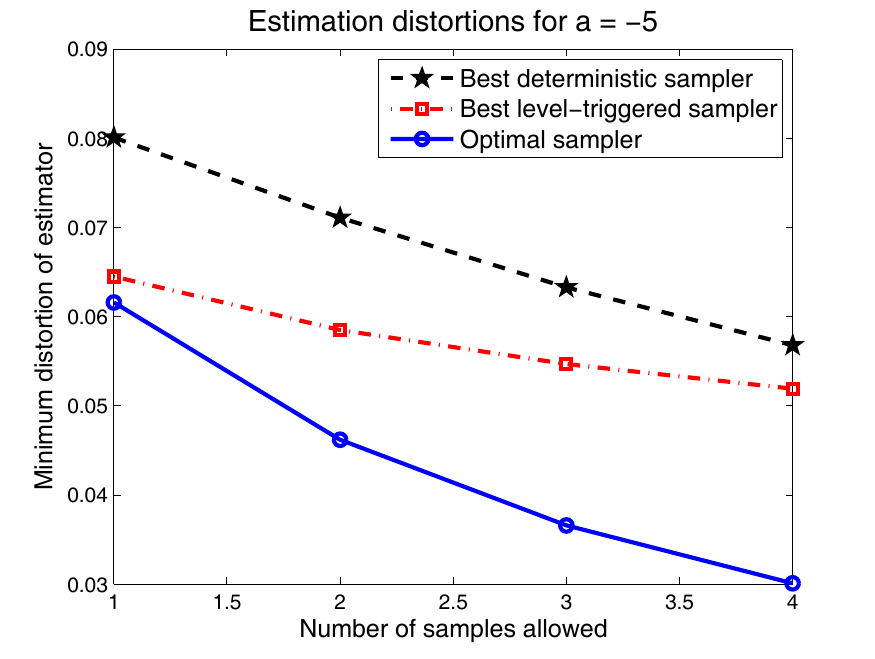}%
    }\\ %
    \subfigure[A stable case]%
    {%
    \includegraphics[width=0.45\textwidth]{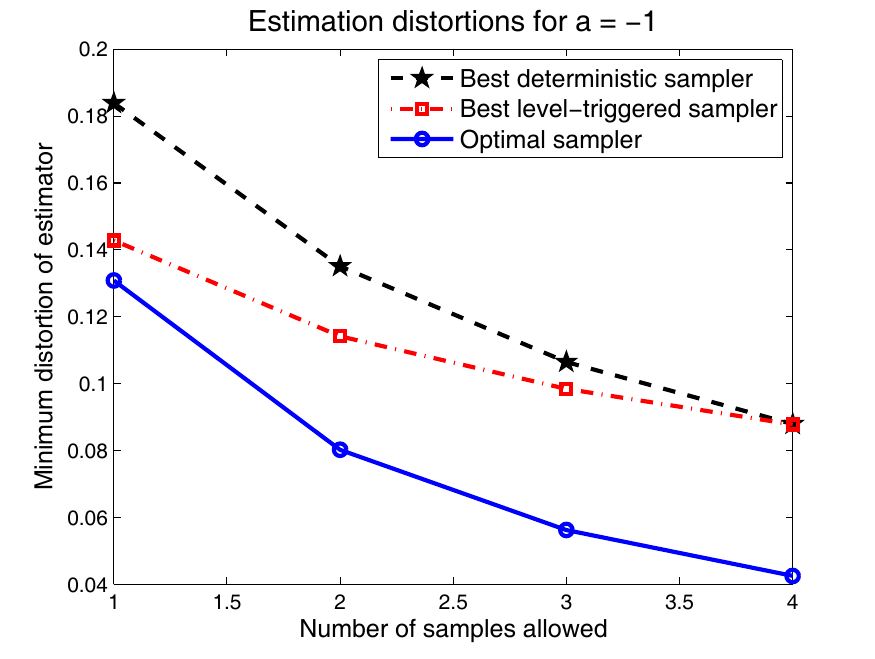}%
    }%
    \subfigure[An unstable case]%
    {%
    \includegraphics[width=0.45\textwidth]{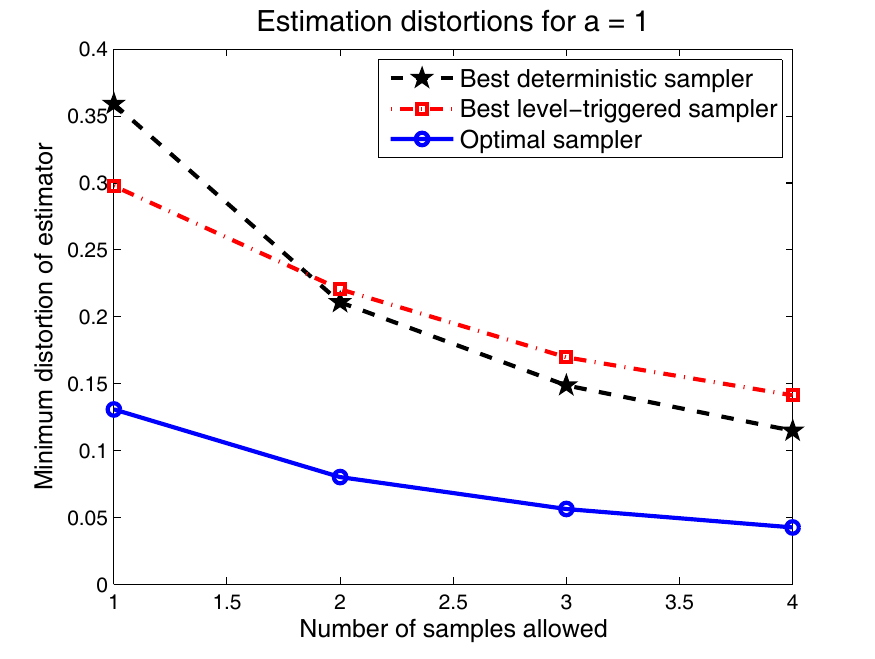}%
    }%
    \caption{The minimum distortions offered by the three sampling methods.
    In the stable regime, Delta sampling is more efficient than deterministic
    sampling when the number of allowed samples is greater than one but still small.
    In the unstable regime, deterministic sampling always beats Delta sampling.}
    \label{ouComparisons}
    \end{centering}
    \end{figure}
Figure~\ref{ouComparisons} shows the result of the numerical
computations for a few stable plants and a single unstable plant.
Again, Delta sampling is not competitive, in the stable
cases, it does provide lower distortion than periodic sampling
when the size of the sample budget is small.

\section{Summary and extensions\label{conclusionsSection}} We
have set up the problem of efficient sampling as an optimal
sequential sampling problem. We conjecture that the
estimator under optimal sampling is the simple least-squares
one under deterministic sampling.
By fixing the estimate to me that under deterministic sampling,
we reduce the optimization into a
tractable stopping time problem.
Our conjecture of course needs to be proved or disproved.

We have furnished methods to obtain good sampling policies for the
finite horizon state estimation problem. When the signal 
is a Brownian motion, we have analytic solutions.
When the signal is an Ornstein-Uhlenbeck process, we have provided
computational recipes to determine the best sampling policies and
their performances. In both cases, Delta sampling performs poorly
with it distortion staying boundedly away from zero even as the
sample budget increases to infinity. This means that the designer
cannot just settle for `natural' event-triggered schemes without
further investigation. In particular, a scheme optimal in the
infinite horizon may perform badly on finite horizons.

The approach adopted in this paper leads us to also consider some
sampling and filtering problems with multiple sensors. These can
possibly be solved in the same way as the single sensor problem.
The case where the samples are not reliably transmitted but can be
lost in transmission is computationally more involved. There, the
relative performances of the three sampling strategies is unknown.
However, in principle, the best policies and their performances
can be computed using nested optimization routines like we have
used in this paper.

Another set of unanswered questions involve the performance of
these sampling policies when the actual objective is not filtering
but control or signal detection based on the samples. It will be
very useful to know the extent to which the overall performance is
decreased by using sampling designs that achieve merely good
filtering performance. The communication constraint we treated in
this paper was a hard limit on the number of allowed samples.
Instead, we could use a soft constraint: a limit on the expected
number of samples. We could also study the effect of mandatory
minimum intervals between successive sampling times. Extension to
nonlinear systems is needed as are extensions to the case of
partial observations at the sensor. One could follow the attack
line sketched at the end of
section~\ref{problemFormulationSection}.


\appendix\label{appendix}
\section*{A. Optimal sampling with non-standard MMSE estimates
\label{appendixPoisson}}
The conjecture expressed by equation~\ref{conjecture} 
 conforms to our intuition about scalar Gaussian diffusions.
Here, we give an example of a
well-behaved and widely used stochastic process for which, the
optimal sampling policy leads to an MMSE estimate which is
different from that under deterministic sampling.
Its increments do not have symmetric PDFs.  For convenience,
we consider an infinite horizon repeated sampling problem where
the communication constraint is a limit on the average sampling
rate.

Choose the state process to be the Poisson counter $N_t$, a
continuous time Markov chain. This is a non-decreasing process
which starts at zero and takes integer values. Its sample paths
are piecewise constant and RCLL. The sequence of times between
successive jumps is IID,  with an exponential
distribution of parameter~$\lambda$.

Under any deterministic sampling rule, the MMSE estimate is
piecewise linear with slope $\lambda$, and has the form:
    \begin{align}
    {\hat{N}}_t & = N_{d_{latest}} + \lambda \left(  t- d_{latest}
    \right), \label{deterMeanPoisson}
    \end{align}
where, $d_{latest}$ is the latest sampling instant as of time $t$.
The optimal sampling policy leads to an MMSE estimate which is
different.

Stipulate that the constraint on the average sampling rate is
equal to  $\lambda$ the parameter of the Poisson process. Consider
the following sampling policy whose MMSE estimate is of the
zero-order hold type:
    \begin{gather}
    {\hat{N}}_t  = N_{{\tau}_{latest}}, \\
    \begin{cases}
        \tau_0 & = 0, \\
        \tau_{i+1} & = \inf \left\{ t \left|  t > \tau_i , N_t > N_{\tau_i} \right.\right\} ,  \ \forall i ~\ge ~0,\\
        \tau_{latest} & = \max \left\{ \tau_i \left|  \tau_i \le t \right. \right\}
    \end{cases}
    \label{optimalSamplingPoisson}
    \end{gather}
This sampling rule with its MMSE estimate ${\hat{N}}_t$ leads to
an error signal which is identically zero. We also have that
    \begin{align*}
    \bbE \left[ \tau_{i+1} - \tau_{i} \right] & = {\frac{1}
    {\lambda}}, \ \forall ~i ~\ge ~0,
    \end{align*}
and so, the communication constraint is met. On the other hand,
the conventional MMSE estimate (\ref{deterMeanPoisson}) would
result in a non-zero average squared error distortion.

Suppose now that the distortion criterion is not the average value
of the squared error but of the lexicographic distance:
    \begin{align*}
    D \left( N_t, {\hat{N}}_t \right)  & =
    \begin{cases}
        0 &  {\text{if}} \ N_t = {\hat{N}}_t , \\
        1 &  {\text{otherwise}},
    \end{cases}
    \end{align*}
then, under deterministic sampling, the maximum likelihood
estimate:
    \begin{align*}
    {\bar{N}}_t & = N_{d_{latest}} + \bigl\lfloor \lambda \left(  t-
    d_{latest} \right) \bigr\rfloor ,
    \end{align*}
minimizes the average lexicographic distortion which will be
non-zero. However, the adaptive policy
(\ref{optimalSamplingPoisson}) provides zero error reconstruction
if the allowed average sampling rate is more than~$\lambda$.


\section*{B. Threshold sampling once
\label{appendixPoisson}}
We drop the subscript $N$ for the terminal sample time:
\begin{align*}
\tau_\delta & = \inf_t\left\{t:\left|x_t - \hat{x}_t
\right|=\delta \right\},
\end{align*}
and its corresponding threshold $\delta$. Here, $\delta$ is a
threshold independent of the data acquired after time 0. Our goal
is to compute the performance measure for any non-negative choice
of the threshold and then select the one that minimizes the
estimation distortion:
    \begin{align}
    J_{Thresh}\left( T ,~1 \right) \left( \delta \right) & =
     \Exp \left[
                   \int_{0}^{ \tau_{\delta}\wedge T } x_s^2 ds
         + \int^{T}_{ \tau_{\delta}\wedge T }
         {\left(x_s-x_{\tau_{\delta}\wedge T}\right)}^2 ds
                    \right].\nonumber\\%
\intertext{By using iterated expectations on the second term, we
           get:}%
    J_{Thresh}\left( T ,~1 \right) \left( \delta \right) & =
     \Exp \left[
                   \int_{0}^{ \tau_{\delta}\wedge T } x_s^2 ds
         +  \Exp \left[ \left. \int^{T}_{ \tau_{\delta}\wedge T }
         {\left(x_s-x_{\tau_{\delta}\wedge T}\right)}^2 ds
                    \right\rvert { \tau_{\delta}\wedge T } , x_{ \tau_{\delta}\wedge T } \right] \right],\nonumber\\
     & =
     \Exp \left[
                   \int_{0}^{ \tau_{\delta}\wedge T } x_s^2 ds
         +  \int^{T}_{ \tau_{\delta}\wedge T } \Exp \left[ \left.
         {\left(x_s-x_{\tau_{\delta}\wedge T}\right)}^2
         \right\rvert { \tau_{\delta}\wedge T } , x_{ \tau_{\delta}\wedge T } \right] ds
                     \right],\nonumber\\
     & =
     \Exp \left[
                   \int_{0}^{ \tau_{\delta}\wedge T } x_s^2 ds
         +  \int^{T}_{ \tau_{\delta}\wedge T }  \left( s - \tau_{\delta}\wedge T \right)ds
                     \right],\nonumber\\
     & =
     \Exp \left[
                   \int_{0}^{ \tau_{\delta}\wedge T } x_s^2 ds
         +  { \frac{1}{2} }  {\left[ { \left(  T - \tau_{\delta}
                                                  \right) }^{+}
           \right]}^{2}
                     \right].
    \label{thresholdRunningPlusTerminal}
    \end{align}
We have thus reduced the distortion measure to a standard form
with a running cost and a terminal cost. We will now take some
further steps and reduce it one with a terminal part alone. Notice
that:
    \begin{gather*}
    d \left[\left(T-t\right) x_t^2 \right] = -x_t^2 dt + 2
    \left(T-t\right) x_t d{x_t} + \left(T-t\right) dt,
    \end{gather*}
which leads to the following representation for the running cost
term:
    \begin{align}
     \Exp \left[   \int_{0}^{ \tau_{\delta}\wedge T } x_s^2 ds \right]
    &  = \Exp \left[  \left(  T - \tau_{\delta}\wedge T  \right)
       x_{\tau_{\delta}\wedge T}^2 + {\frac{T^2}{2}} - {\frac{1}{2}} {\left(
             T - \tau_{\delta}\wedge T  \right)}^2
             \right], \nonumber \\
    &  =  {\frac{T^2}{2}} - \Exp \left[  x_{\tau_{\delta}\wedge T}^2 { \left(  T -
    \tau_{\delta}
                                                  \right) }^{+}  +
       {\frac{1}{2}}  {\left[  { \left(  T - \tau_{\delta}
                                                  \right) }^{+}
           \right]}^{2}
    \right], \label{reduceRunningCost} \\
        &  =  {\frac{T^2}{2}} - \Exp \left[  \delta^2 { \left(  T -
    \tau_{\delta}
                                                  \right) }^{+}  +
       {\frac{1}{2}}  {\left[  { \left(  T - \tau_{\delta}
                                                  \right) }^{+}
           \right]}^{2}
    \right].\nonumber
    \end{align}
Note that equation \ref{reduceRunningCost} is valid even if we
replace $\tau_\delta$ with a random time that is a stopping time
w.r.t. the $x$-process. Thus, the cost
(\ref{thresholdRunningPlusTerminal}) becomes:
    \begin{align}
    J_{Thresh}\left( T ,~1 \right) \left( \delta \right) & =
     {\frac{T^2}{2}} -  \delta^2  \Exp  \left[  {
    \left(  T - \tau_{\delta} \right) }^{+}   \right].
    \label{singleThresholdFinalForm}
    \end{align}
If we can describe the dependence of the expected residual time
${\smash{\Exp \left[{\smash{ { \left(  T - \tau_{\delta} \right)
}^{+}}} \right]}}$  on the threshold $\delta$, then, we can
parametrize the cost purely in terms of $\delta$. Had we known the
PDF of $\tau_\delta$ the computation of the expectation of the
difference $(T-\tau_\delta)^+$ would have been easy. Unfortunately
the PDF of the hitting time $\tau_\delta$ does not have a closed
form solution. There exists a series representation we can find in
page 99 of Karatzas and Shreve~\cite{karatzas-shreve} which is:
    \begin{align*}
    f_{\tau_{\delta}}(t) & = \delta\sqrt{\frac{2}{\pi
    t^3}}\sum_{k=-\infty}^\infty
    (4k+1)e^{-\frac{(4k+1)^2\delta^2}{2t}}.
    \end{align*}
This series is not integrable and so it cannot meet our needs.
Instead we compute the moment generating function of
$(T-\tau_\delta)^+$ and thereby compute the expected distortion.
\section*{B. Statistics of a threshold Hitting time curtailed by a time-out T\label{thresholdCalculationAppendix}}
We start by deriving the Moment generating function of the first
hitting time $\tau_\delta$:
\begin{align*}
\tau_\delta & = \inf_t\left\{t:\left|x_t - \hat{x}_t
\right|=\delta \right\},
\end{align*}
with the initial condition $x_0 - \hat{x}_0 = w_o$.
\begin{lemma} \label{MGFlemma} If $\tau_\delta$ is the first hitting time of
$|x_t - \hat{x}_t|$ at the threshold $\delta$, then,
$$
\Exp[e^{-s\tau_\delta}]=\frac{\cosh(w_0\sqrt{2s})}{\cosh(\delta\sqrt{2s})}=F_\tau(s).
$$
\end{lemma}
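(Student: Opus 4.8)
The plan is to reduce the statement to a first–exit time for Brownian motion and then read off the Laplace transform from an It\^o/optional-stopping argument with a hyperbolic-cosine test function. On a single inter-sample interval the supervisor's estimate is a zero-order hold (Lemma \ref{optimalMMSE}), so $\hat{x}_t$ is held constant and, since $dx_t = dB_t$, the error $e_t = x_t - \hat{x}_t$ is a standard Brownian motion started at $w_0$. Consequently $\tau_\delta = \inf\{t : |e_t| = \delta\}$ is exactly the first exit time of $e_t$ from the symmetric interval $(-\delta,\delta)$, and one has the elementary bound $\Exp[\tau_\delta] = \delta^2 - w_0^2 < \infty$ (apply optional stopping to the martingale $e_t^2 - t$), so in particular $\tau_\delta < \infty$ almost surely.

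Next I would fix $s>0$ and set $h(y) = \cosh(y\sqrt{2s})$, which solves the ODE $\tfrac12 h'' - s\,h = 0$. It\^o's formula applied to $M_t := e^{-st} h(e_t)$ then gives $dM_t = e^{-st}\bigl(\tfrac12 h''(e_t) - s\,h(e_t)\bigr)\,dt + e^{-st} h'(e_t)\,dB_t = e^{-st} h'(e_t)\,dB_t$, so $M_t$ is a local martingale. On the stochastic interval $[0,\tau_\delta]$ the process $e_t$ stays in $[-\delta,\delta]$, where $h$ is bounded, and $e^{-st} \le 1$, so $M_{t\wedge\tau_\delta}$ is a uniformly bounded (hence uniformly integrable) martingale. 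Optional stopping at the a.s.\ finite time $\tau_\delta$ then yields $\Exp[M_{\tau_\delta}] = M_0 = h(w_0) = \cosh(w_0\sqrt{2s})$. Since $|e_{\tau_\delta}| = \delta$ and $h$ is even, $h(e_{\tau_\delta}) = \cosh(\delta\sqrt{2s})$ is deterministic, so $\cosh(\delta\sqrt{2s})\,\Exp[e^{-s\tau_\delta}] = \cosh(w_0\sqrt{2s})$; dividing by the strictly positive $\cosh(\delta\sqrt{2s})$ gives the claimed identity. The value $s=0$ is the trivial equality $1 = 1$, and the formula extends to a neighbourhood of the origin by analyticity of both sides.

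The It\^o computation and the boundedness bookkeeping are routine; the only delicate point is the legitimacy of optional stopping, i.e.\ confirming $\tau_\delta < \infty$ a.s.\ and that $M_{t\wedge\tau_\delta}$ is uniformly integrable. Both are immediate here (finiteness from $\Exp[\tau_\delta] = \delta^2 - w_0^2 < \infty$, uniform integrability from the uniform bound $|M_{t\wedge\tau_\delta}| \le \cosh(\delta\sqrt{2s})$), so I do not expect any real obstacle. In the paper's application $w_0 = 0$ because the error vanishes at a sampling instant, so the formula specializes to $\Exp[e^{-s\tau_\delta}] = 1/\cosh(\delta\sqrt{2s})$, which is the form used in the subsequent computation of $\Exp[(T-\tau_\delta)^+]$ and of its second moment.
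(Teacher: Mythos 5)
Your proof is correct and follows essentially the same route as the paper: an It\^o argument with the space--time harmonic function $e^{-st}\cosh(\sqrt{2s}\,w)$ evaluated at the exit time of $(-\delta,\delta)$ (the paper packages it as $h(w,t)=e^{-st}[1-\cosh(\sqrt{2s}w)/\cosh(\sqrt{2s}\delta)]$ so the boundary term vanishes, but the computation is identical). Your version additionally supplies the optional-stopping justification (a.s.\ finiteness of $\tau_\delta$ and uniform boundedness of the stopped martingale) that the paper leaves implicit.
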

\begin{proof}
Consider the ${\mathbb{C}}^2$ function
$h(w,t)=e^{-st}[1-\cosh(\sqrt{2s}w)/\cosh(\sqrt{2s}\delta)]$ and
apply It\^o calculus on $h\left(w_t,t\right)$. We can then
conclude that
\begin{align*}
\Exp\left[h\left(w_{\tau_\delta},{\tau_\delta}\right)\right]-h(w_0,0)
& =
\Exp\left[\int_0^{\tau_\delta}[h_t(w_t,t)+0.5h_{ww}(w_t,t)]\,dt\right]=\Exp[e^{-s\tau_\delta}]-1,
\end{align*}
from which we immediately obtain the desired relation because of
the boundary condition: $h(w_{\tau_\delta},{\tau_\delta})=0$.
\end{proof}
Lemma\,\ref{MGFlemma} suggests that the PDF of the random variable
$\tau_\delta$ can be computed as $f_\tau(t)=\mL^{-1}(F_\tau(s))$,
that is, the inverse Laplace transform of $F_\tau(s)$. Invoking
the initial condition $w_0=0$, we can then write
\begin{align*}
\Exp[(T-\tau_\delta)^+]& =\int_0^T(T-t)f_\tau(t)dt =
\int_0^T(T-t)\left[\frac{1}{2\pi j}\oint F_\tau(s)e^{st}\,ds\right]dt\\
& = \frac{1}{2\pi j}\oint F_\tau(s)\left[\int_0^T(T-t)e^{st}\,dt\right]\,ds\\
& = \frac{1}{2\pi j}\oint\frac{e^{sT}-1-sT}{s^2
\cosh(\delta\sqrt{2s})}\,ds.
\end{align*}
The previous integral is on a path that includes the whole left
half of the complex plane.

In order to compute this line integral over the complex plane, we
need to find the poles of the integrand and then apply the residue
theorem. Notice first that $s=0$ is not a pole since the numerator
has a double zero at zero. The only poles come from the zeros of
the function $\cosh(\delta\sqrt{2s})$. Since $\cosh(x)=\cos(jx)$
we conclude that the zeros of $\cosh(\delta\sqrt{2s})$ which are
also the poles of the integrand are:
\begin{gather*}
s_k  = -(2k+1)^2\frac{\pi^2}{8\delta^2},~k=0,1,2,\ldots
\end{gather*}
and they all belong to the negative half plane. This of course
implies that they all contribute to the integral. We can now apply
the residue theorem to conclude that
$$
\Exp[(T-\tau)^+]= \frac{1}{2\pi j}\oint\frac{e^{sT}-1-sT}{s^2
\cosh(\delta\sqrt{2s})}\,ds= \sum_{k\ge 0}
\frac{e^{s_kT}-1-s_kT}{s_k^2}\lim_{s\to
s_k}\frac{s-s_k}{\cosh(\delta\sqrt{2s})}.
$$
In order to find the last limit we can assume that
$s=s_k(1+\epsilon)$ and let $\epsilon\to0$. Then, we can show that
\begin{align*}
\lim_{s\to s_k}\frac{s-s_k}{\cosh(\delta\sqrt{2s})} &
=(-1)^{(k+1)}\frac{4s_k}{(2k+1)\pi}.
\end{align*}
Using this expression, the performance measure of the stopping
time~$\tau_\delta$ takes the following form:
\begin{align*}
J_{Thresh}\left( T ,~1 \right) &={\frac{T^2}{2}} -  \delta^2 \Exp
{\left[  { \left(  T - \tau_{\delta} \right) }^{+}
\right]},\\
&=\frac{T^2}{2}\left\{1-\frac{8\delta^2}{\pi T}\sum_{k\ge
0}(-1)^{(k+1)}
\frac{1}{2k+1}\frac{e^{s_kT}-1-s_kT}{s_k T}\right\}\\
& = \frac{T^2}{2}\phi(\lambda), 
\end{align*}
where, with the change of variables:
$\lambda = \frac{T\pi^2}{8\delta^2}$, we have:
\begin{align*}
 \phi(\lambda) &{\stackrel{\Delta}{=}} 1-\frac{\pi}{\lambda^2}\sum_{k\ge 0}(-1)^{k}
\frac{e^{-(2k+1)^2\lambda}-1+(2k+1)^2\lambda}{(2k+1)^3}, \\
 & = 1-\frac{\pi}{\lambda^2}\sum_{k\ge 0}{
             \frac{ {(-1)^{k}} e^{-(2k+1)^2\lambda} } {(2k+1)^3}}
      +\frac{\pi}{\lambda^2}\sum_{k\ge 0}{
              \frac{ {(-1)^{k}} } {(2k+1)^3}}
      -\frac{\pi}{\lambda}\sum_{k\ge 0}{
             \frac{ {(-1)^{k}}  } {2k+1}}.
\end{align*}%
The final two series in the last equation can be summed
explicitly. To do so, we adopt a summation technique described in
the book of Aigner and
Ziegler~\cite{proofsFromTheBookSecondEdition}. Consider:
\begin{gather*}
\int_{0}^{1}{{\frac{dx}{1+x^2}}} \ = \ \int_{0}^{1}
\left(\sum_{k\ge 0}{
 {(-1)^{k}} {x^{2k}}} \right)dx \ = \
\sum_{k\ge 0}{
 {(-1)^{k}} \int_{0}^{1} {x^{2k}}}dx \ = \ \sum_{k\ge 0}{
             \frac{ {(-1)^{k}}  } {2k+1}}.
\end{gather*}
By an easy evaluation of the definite integral we started with, we
get a sum of ${\tfrac{\pi}{4}}$ for the series $\sum_{k\ge
0}{\tfrac{ {(-1)^{k}} } {2k+1}}$; this result is useful because
the series converges slowly. Proceeding along similar
lines~\cite{beukersKolk} and working with the multiple integral:
\begin{gather*}
\idotsint\displaylimits_{A}{\frac{dx_1\cdots dx_n}{1+{\left(
x_1x_2\cdots x_n\right)}^2}},
\end{gather*}
over the unit hypercube $A = {\left[0,1\right]}^{n}$ in
${\mathbb{R}}^n$, we get an explicit expression for the sum
$\sum_{k\ge 0}{\tfrac{ {(-1)^{k}} } {(2k+1)^n}}$ whenever $n$ is
an odd number. In particular,
\begin{align*}
\sum_{k\ge 0}{\frac{ {(-1)^{k}}  } {(2k+1)^3}} & =
{\frac{\pi^3}{32}}; & \sum_{k\ge 0}{\frac{ {(-1)^{k}}} {(2k+1)^5}}
& ={\frac{5\pi^5}{1536}}.
\end{align*}
This reduces the distortion to:
\begin{gather}
{\boxed{%
 J_{Thresh}\left( T ,~1 \right)
    \ = \ \frac{T^2}{2}\phi(\lambda) \ = \  %
        {\frac{T^2}{2}} \left\{ 1 +{\frac{\pi^4}{32\lambda^2}}%
          -{\frac{\pi^2}{4\lambda}}%
          -{\frac{\pi}{\lambda^2}}
              \sum_{k\ge 0}{ \frac{ {(-1)^{k}}
                  e^{-(2k+1)^2\lambda} }{(2k+1)^3}} \right\},%
}}%
\label{thresholdJforOneSample}
\end{gather}
where, $\lambda = \frac{T\pi^2}{8\delta^2}$.

The estimation distortion due to using $N+1$ samples when $N$ is
non-negative is given through the recursion:
\begin{align*}
J_{Thresh}\left( T ,~N+1 \right)
   & =  {\frac{T^2}{2}} -  \delta^2 \Exp {\left[  { \left(  T - \tau_{\delta_{N+1}}
                                              \right) }^{+}   \right]}
\\
& \ \ \
      - \left( {\frac{1}{2}} - J_{Thresh}\left( T ,~N \right)
             \right) \Exp  \left[ {\left[  { \left(  T - \tau_{\delta_{N+1}}
                         \right) }^{+}   \right]}^{2} \right].
\end{align*}
This recursion decomposes the distortion into an expression
involving the performance of the policies for the last $N$ times
of the first of $N+1$ sampling times over the horizon
$\left[0,T\right]$ , and, the influence of the first of the
sampling times namely, $\tau_{\delta_{N+1}}$. Hence, for our
recursive solution to the multiple stopping problem, we need to
minimize costs like:
\begin{align*}
\Upsilon \left( T , \alpha, \delta \right)
   & {\stackrel{\Delta}{=}} {\frac{T^2}{2}} -  \delta^2 \Exp {\left[  { \left(  T - \tau_{\delta}
                                              \right) }^{+}   \right]}
      - \left( {\frac{1}{2}} - \alpha
             \right) \Exp  \left[ {\left[  { \left(  T - \tau_{\delta}
                         \right) }^{+}   \right]}^{2} \right],
\end{align*}
where $\alpha$ is positive but no greater than 0.5. This requires
an evaluation of the second moment: $\Exp  \left[ {\left[  {
\left( T - \tau_{\delta} \right) }^{+}   \right]}^{2} \right]$.
The evaluation can be done similarly to the one for the first
moment:
\begin{align*}
\Exp\left[{\left[\left(T-\tau\right)^+\right]}^2\right] &
 = {\frac{1}{\pi j}}
 \oint  {\frac{e^{sT}-1-sT-{\frac{1}{2}}s^2T^2}{s^3
\cosh\left(\delta\sqrt{2s}\right)}} \,ds.
\end{align*}
This gives the following expression for the cost $\Upsilon \left(
T , \alpha, \delta \right)$:
\begin{align*}
\Upsilon \left( T , \alpha, \delta \right)
 &=  {\frac{T^2}{2}} \left\{ \phi(\lambda) +  \left[ {\frac{1}{2}} - \alpha
             \right] \psi(\lambda) \right\}, 
\end{align*}
where, $\lambda = \frac{T\pi^2}{8\delta^2}$, and we define
functions $\phi, \psi$ with $\phi$ being the same as it was
earlier in this part of the appendix:
\begin{align*}
 \phi\left(\lambda\right) &{\stackrel{\Delta}{=}} 1-\frac{\pi}{\lambda^2}\sum_{k\ge 0}(-1)^{k}
\frac{e^{-(2k+1)^2\lambda}-1+(2k+1)^2\lambda}{(2k+1)^3}, \\
 & = 1-\frac{\pi}{\lambda^2}\sum_{k\ge 0}{
             \frac{ {(-1)^{k}} e^{-(2k+1)^2\lambda} } {(2k+1)^3}}
      +\frac{\pi}{\lambda^2}\sum_{k\ge 0}{
              \frac{ {(-1)^{k}} } {(2k+1)^3}}
      -\frac{\pi}{\lambda}\sum_{k\ge 0}{
             \frac{ {(-1)^{k}}  } {2k+1}},\\
\intertext{and,}
 \psi\left(\lambda\right) &{\stackrel{\Delta}{=}}
           {\frac{16}{\pi\lambda^2}}\sum_{k\ge 0}(-1)^{k}
       \frac{e^{-(2k+1)^2\lambda}-1+{\left(2k+1\right)}^2
      \lambda-0.5{\left(2k+1\right)}^4\lambda^2}{{\left(2k+1\right)}^5}, \\
& = {\frac{16}{\pi\lambda^2}}\sum_{k\ge 0}{
             \frac{ {(-1)^{k}}  e^{-(2k+1)^2\lambda} } { {\left(2k+1\right)}^5}}
      -{\frac{16}{\pi\lambda^2}}\sum_{k\ge 0}{
              \frac{ {(-1)^{k}} } {{\left(2k+1\right)}^5}}
      +{\frac{16}{\pi\lambda}}\sum_{k\ge 0}{
             \frac{ {(-1)^{k}}  } {{\left(2k+1\right)}^3}}
      -{\frac{8}{\pi}}\sum_{k\ge 0}{
             \frac{ {(-1)^{k}}  } {2k+1}}. 
\end{align*}%

After replacing the summable series with their sums, the
distortion due to multiple samples based on thresholds reduces to
the boxed expression below.  With $\lambda =
\frac{T\pi^2}{8\delta^2}$,
\begin{gather}
{\boxed{%
\begin{split}
%
 J_{Thresh}\left( T ,~N+1 \right)
     & =   %
       {\frac{T^2}{2}} \left\{ 1 +{\frac{\pi^4}{32\lambda^2}}%
          -{\frac{\pi^2}{4\lambda}}%
          -{\frac{\pi}{\lambda^2}}
              \sum_{k\ge 0}{ \frac{ {(-1)^{k}}
                  e^{-(2k+1)^2\lambda} }{(2k+1)^3}} \right.%
\\
& \left. \quad \quad \quad  \ + \bigl( 0.5 -  J_{Thresh}\left( T ,~N \right) \bigr)%
 \left[ {\frac{-5\pi^4}{96\lambda^2}}%
          -{\frac{\pi^2}{2\lambda}}%
          -2  +{\frac{16}{\pi\lambda^2}} \sum_{k\ge 0}{ \frac{ {(-1)^{k}}
                  e^{-(2k+1)^2\lambda} }{(2k+1)^5}}
\right] \right\}.
\end{split}
}}%
\label{thresholdJforManySamples}
\end{gather}

To characterize the statistics of sample budget utilization by
multiple Delta sampling we need to find the probabilities of
threshold crossings before the time-out $T$. Given the limit of
the allowed number of samples N, let $\Xi_N$ be the random number
of samples generated under the multiple delta sampling scheme,
Then, we have:
\begin{align*}
{\mathbb{E}}{\left[ \Xi_N \right]}
   & \ = \ 0 \cdot {\mathbb{P}}{\left[ \tau_{\delta_{{}_{N}}} \ge T \right]}
+\left( 1 + {\mathbb{E}}{\left[ \Xi_{N-1} \right]} \right)\cdot
{\mathbb{P}}{\left[ \tau_{\delta_{{}_{N}}} < T \right]},\\
 & \ = \
\left( 1 + {\mathbb{E}}{\left[ \Xi_{N-1} \right]} \right)\cdot
\left( 1 - {\mathbb{P}}{\left[ \tau_{\delta_{{}_{N}}} \le T
\right]} \right).
\end{align*}
As before, we use the Moment generating function of the hitting
time to obtain:
\begin{gather*}
\Exp\left[\Xi_1 \right] \ =  \ \Exp\left[{\mathbf{1}}_{\left\{
\tau_{\delta_{{}_{1}}}
> T \right\}}
  \right] \ = \
 {\frac{1}{\pi j}}
 \oint  {\frac{e^{sT}-1}{s \cdot
\cosh\left(\delta\sqrt{2s}\right)}} \,ds.
\end{gather*}
With the notation $\lambda = \frac{T\pi^2}{8\delta^2_{{}_{1}}}$,
and evaluating this complex line integral as in previous cases, we
obtain:
\begin{gather}
{\boxed{%
%
 \Exp\left[\Xi_1 \right] \ =  \ {\mathbb{P}}\left[ \tau_{\delta_{{}_{1}}} \le T \right]
      \ = \  %
       1 -{\frac{4}{\pi}}
              \sum_{k\ge 0}{(-1)^{k}{ \frac{
                  e^{-(2k+1)^2\lambda}} {2k+1}}}%
.
}}%
\label{firingProbabilityExpression}
\end{gather}


\section*{Acknowledgments}

This work was supported by the United States Army Research Office
through the Center for Networked Communication and Control Systems
at Boston University Grant No. DAAD 19021 0319, and under the
CIP-URI Grant No. DAAD 190110494. The first author also received
partial financial support from the Swedish Research Council, and
the European Commission.


\bibliographystyle{siam}
\bibliography{mabenRefs}

\def\polhk#1{\setbox0=\hbox{#1}{\ooalign{\hidewidth
  \lower1.5ex\hbox{`}\hidewidth\crcr\unhbox0}}}
\begin{thebibliography}{10}

\bibitem{proofsFromTheBookSecondEdition}
{\sc Martin Aigner and G{\"u}nter~M. Ziegler}, {\em Proofs from {T}he {B}ook},
  Springer-Verlag, Berlin, second~ed., 2001.
\newblock Including illustrations by Karl H. Hofmann.

\bibitem{bitsThroughQueues}
{\sc Venkat Anantharam and Sergio Verdu}, {\em Bits through queues}, IEEE
  Transactions on Information Theory, 42 (1996), pp.~4--16.

\bibitem{antsaklisBaillieulSpecialIssue}
{\sc Panos Antsaklis and John Baillieul}, {\em Special issue on technology of
  networked control systems}, Proceedings of the IEEE, 95 (2007).

\bibitem{astrom-bernhardsson-cdc}
{\sc Karl~Johan {\AA}str{\"o}m and Bo~Bernhardsson}, {\em Comparison of
  {R}iemann and {L}ebesgue sampling for first order stochastic systems}, in
  Proceedings of the 41st IEEE conference on Decision and Control (Las Vegas
  NV, 2002), 2002, pp.~2011--2016.

\bibitem{barasSensorScheduling}
{\sc John~S. Baras and Alain Bensoussan}, {\em Optimal sensor scheduling in
  nonlinear filtering of diffusion processes}, SIAM J. Control Optim., 27
  (1989), pp.~786--813.

\bibitem{beukersKolk}
{\sc Frits Beukers, Johan A.~C. Kolk, and Eugenio Calabi}, {\em Sums of
  generalized harmonic series and volumes}, Nieuw Arch. Wisk. (4), 11 (1993),
  pp.~217--224.

\bibitem{carmonaDayanik}
{\sc Ren{\'e} Carmona and Savas Dayanik}, {\em Optimal multiple stopping of
  linear diffusions}, Mathematics of Operations Reseasrch, 33 (2008),
  pp.~446--460.

\bibitem{wirelessHART}
{\sc WirelessHART consortium}, {\em Wirelesshart protocol specification}, tech.
  report, HART communication foundation, April 2008.

\bibitem{gaborGyorfi}
{\sc George Gabor and Zolt\'{a}n Gy{\H{o}}rfi}, {\em Recursive source coding},
  Springer-Verlag, New York, 1986.
\newblock A theory for the practice of waveform coding.

\bibitem{graySourceCodingbook}
{\sc Robert~M. Gray}, {\em Source coding theory}, Kluwer academic publishers,
  1990.

\bibitem{hajekPagingITW}
{\sc Bruce Hajek}, {\em Jointly optimal paging and registration for a symmetric
  random walk}, in Proceedings of the 2002 IEEE Information Theory Workshop,
  oct. 2002, pp.~20 -- 23.

\bibitem{hajekPagingJournal}
{\sc Bruce Hajek, Kevin Mitzel, and Sichao Yang}, {\em Paging and registration
  in cellular networks: jointly optimal policies and an iterative algorithm},
  IEEE Transactions on Information Theory, 54 (2008), pp.~608--622.

\bibitem{cervinAutomatica}
{\sc Toivo Henningsson, Erik Johannesson, and Anton Cervin}, {\em Sporadic
  event-based control of first-order linear stochastic systems}, Automatica, 44
  (2008), pp.~2890--2895.

\bibitem{imerBasarACC06two}
{\sc O.C. Imer and T.~Basar}, {\em Optimal control with limited controls}, in
  Proceedings of the 2006 American Control Conference, June 2006, pp.~298--303.

\bibitem{jacobssonTCP}
{\sc Van Jacobson}, {\em Congestion avoidance and control}, SIGCOMM Computer
  Communication Review, 25 (1995), pp.~157--187.

\bibitem{kalleCAN}
{\sc Karl~H. Johansson, Martin T{\"o}rngren, and Lars Nielsen}, {\em Vehicle
  applications of controller area network}, in Handbook of Networked and
  Embedded Control Systems, William S.~Levine Dmiitris Hristu-Varsakelis, and,
  ed., Birkhauser, 2005.

\bibitem{karatzas-shreve}
{\sc Ioannis Karatzas and Steven~E. Shreve}, {\em Brownian motion and
  stochastic calculus}, vol.~113 of Graduate Texts in Mathematics,
  Springer-Verlag, New York, second~ed., 1991.

\bibitem{karatzas-shreve-finance}
\leavevmode\vrule height 2pt depth -1.6pt width 23pt, {\em Methods of
  mathematical finance}, vol.~39 of Applications of Mathematics (New York),
  Springer-Verlag, New York, 1998.

\bibitem{kushner-timing}
{\sc Harold~J. Kushner}, {\em On the optimum timing of observations for linear
  control systems with unknown initial state}, IEEE Trans. Automatic Control,
  AC-9 (1964), pp.~144--150.

\bibitem{kushnerNumericalMethodsBook}
{\sc Harold~J. Kushner and Paul Dupuis}, {\em Numerical methods for stochastic
  control problems in continuous time}, Springer-Verlag, New York, second~ed.,
  2001.

\bibitem{tilburyHandbook}
{\sc Feng-Li Lian, James~R. Moyne, and Dawn~M. Tilbury}, {\em Network protocols
  for networked control systems}, in Handbook of Networked and Embedded Control
  Systems, Dimitrios Hristu-Varsakelis and William~S. Levine, eds., Birkhauser,
  2005, pp.~651--675.

\bibitem{meier67measurement}
{\sc Lewis {Meier III}, John Peschon, and Robert~M. Dressler}, {\em Optimal
  control of measurement subsystems}, IEEE Transactions on Automatic Control,
  12 (October 1967), pp.~528--536.

\bibitem{norsworthy}
{\sc Steven~R. Norsworthy, Richard Schreier, and Gabor~C. Temes}, eds., {\em
  Delta-Sigma Data Converters: Theory, Design, and Simulation}, Wiley-IEEE
  Press, 1996.

\bibitem{peskirShiryaev}
{\sc Goran Peskir and Albert Shiryaev}, {\em Optimal stopping and free-boundary
  problems}, Lectures in Mathematics ETH Z\"urich, Birkh\"auser Verlag, Basel,
  2006.

\bibitem{rabiThesis}
{\sc Maben Rabi}, {\em Packet based Inference and Control}, PhD thesis,
  University of Maryland, College Park, Sep 2006.

\bibitem{rabi-baras-bahamas}
{\sc Maben Rabi and John~S. Baras}, {\em Sampling of diffusion processes for
  real-time estimation}, in Proceedings of the 43rd IEEE conference on Decision
  and Control (Paradise Island Bahamas, 2004), 2004, pp.~4163--4168.

\bibitem{rabi-moustakides-baras-med06}
{\sc Maben Rabi, John~S. Baras, and George~V. Moustakides}, {\em Efficient
  sampling for keeping track of a gaussian process}, in Proceedings of the 14th
  Mediterranean Conferences on Control and Automation (Ancona, 2006), 2006.

\bibitem{iwsm09}
{\sc Maben Rabi and Karl~Henrik Johansson}, {\em Optimal stopping for updating
  controls}, in Proccedings of the second International workshop on sequential
  methods, UTT, Troyes, France, June 2009.

\bibitem{rabiJohanssonsCDC08}
{\sc Maben Rabi, Karl~H. Johansson, and Mikael Johansson}, {\em Optimal
  stopping for event-triggered sensing and actuation}, in Proceedings of the
  47th IEEE Conference on Decision and Control (Cancun, Mexico, 2008), December
  2008.

\bibitem{rabi-moustakides-baras-cdc06}
{\sc Maben Rabi, George~V. Moustakides, and John~S. Baras}, {\em Multiple
  sampling for estimation on a finite horizon}, in Proceedings of the 45th IEEE
  conference on Decision and Control (San Diego, CA, 2006), 2006,
  pp.~1351--1357.

\bibitem{skafidasNerode}
{\sc E.~Skafidas and Anil Nerode}, {\em Optimal measurement scheduling in
  linear quadratic gaussian control problems}, in Proceedings of the 1998 IEEE
  International Conference on Control Applications, 1998, vol.~2, September
  1998, pp.~1225--1229.

\bibitem{wuArapostathisTAC08}
{\sc Wei Wu and A.~Arapostathis}, {\em Optimal sensor querying: General
  markovian and lqg models with controlled observations}, IEEE Transactions on
  Automatic Control, 53 (2008), pp.~1392--1405.

\end{thebibliography}

\end{document}